\newcommand{\ran}{\mathop{\mathrm{ran}}}
\newcommand{\free}[1]{\underset{\scriptscriptstyle #1}{\displaystyle{\ast}}\,}
\title[Maximal C$^*$-covers]{Identification of maximal C$^*$-covers of some operator algebras}
\author{Benton L.\ Duncan}
\address{
Department of Mathematics \\
Illinois State University \\
Normal, Illinois \\
USA }
\email{bldunc1@ilstu.edu}
\subjclass{47L30, 47L40, 46L09}
\keywords{maximal C$^*$-covers, free products}
\begin{document}

\theoremstyle{plain}
\newtheorem{theorem}{Theorem}[section]
\newtheorem{lemma}[theorem]{Lemma}
\newtheorem{proposition}[theorem]{Proposition}
\newtheorem{corollary}[theorem]{Corollary}

\theoremstyle{definition}
\newtheorem{definition}[theorem]{Definition}
\newtheorem*{algorithm}{Algorithm}
\newtheorem*{construction}{Construction}
\newtheorem{example}[theorem]{Example}

\theoremstyle{remark}
\newtheorem*{conjecture}{Conjecture}
\newtheorem*{acknowledgement}{Acknowledgements}
\newtheorem{remark}[theorem]{Remark}

\begin{abstract} We use results on inclusions of free products and extensions of completely positive maps to determine the maximal C$^*$-envelope for upper triangular $3 \times 3$ matrices. We consider these same results in the context of larger upper triangular matrices and graph algebras associated to cycle graphs. 
\end{abstract} 

\maketitle

\section{Introduction}

In 1999, Blecher \cite{Blecher} introduced the concept of the maximal C$^*$-cover of an operator algebra. This algebra encodes the completely contractive representation theory of an operator algebra. In particular, given an operator algebra $A$, the maximal C$^*$-cover C$^*_{\rm max} (A)$ is the C$^*$-algebra generated by a completely isometric copy of $A$ so that any completely contractive representation of $A$ into a C$^*$-algebra extends to a $^*$-representation of C$^*_{\rm max} (A)$ into the same C$^*$-algebra. In that paper, Blecher provided a concrete example for the operator algebra $T_2$ of the upper triangular $2 \times 2$ matrices which is the universal C$^*$-algebra generated by a contractive nilpotent of degree 2 (see the first row of Table 1 in \cite{loring}). 

Several authors have extended the use of the maximal C$^*$-cover for operator algebras. It has found use in \cite{BlecherDuncan} to understand a version of $Ext$ for nonselfadjoint operator algebras. More recently, in \cite{ClouatreRamsey} and \cite{Thompson} the maximal C$^*$-cover of an operator algebra was used to study a notion of residually finite dimensional nonselfadjoint operator algebras. At the same time, the use of the maximal C$^*$-covers is constrained somewhat by the lack of concrete examples of maximal C$^*$-covers. The author made a general attempt to understand the maximal C$^*$-cover of an operator algebra using Banach $*$-algebra methods in \cite{Duncan2}, but even there concrete examples were absent.

In a similar spirit to that of this paper, in \cite{KirchbergWassermann}, the authors consider the universal C$^*$-algebra generated by an operator system (arising in a similar manner from universal properties) and there they construct maximal C$^*$-algebras of the operator systems $\mathbb{C}, \mathbb{C} \oplus \mathbb{C},$ and $ \mathbb{C} \oplus \mathbb{C} \oplus \mathbb{C}$. They also show that the universal C$^*$-algebra of the operator system $M_2(\mathbb{C})$ is non-exact. An important difference to note is that viewing these as operator systems is different from viewing them as operator algebras, since in all three cases, as C$^*$-algebras, their (algebraic) maximal C$^*$-cover is themselves. In that context, maximal C$^*$-envelopes of operator systems were used by Davidson and Kennedy in a recent preprint \cite{DavidsonKennedy}. 

In this paper, we do two things. First, we build tools to more readily understand inclusions of free products of C$^*$-algebras. These extend results of \cite{Pedersen} concerning pushout diagrams and their extensions in \cite{ArmstrongDykemaExelLi} to general inclusions of C$^*$-algebras in free products. We also extend recent results on conditional expectations of free products from \cite{DavidsonKakariadis}. Second, we use the tools we develop to consider a method to build concrete maximal C$^*$-covers in two contexts. There we will see that even simple operator algebras give rise to non-exact maximal C$^*$-covers. We will also demonstrate how quickly the complications of this technique interfere in coherent natural generalizations of the example we build.

\section{Inclusions of free products}

Before we focus on identifying the maximal C$^*$-covers of interest, we need to provide a context for placing them into a concrete C$^*$-algebra. To do this we will need to extend some well-known results on C$^*$-algebras. We are able to do this because we will be identifying our operator algebras as free products of well understood operator algebras. These extensions of known results are interesting in their own right. 

We start by letting $A_{j,k}$ be the C$^*$-subalgebra of $M_k \otimes C[0,1]$ of the form 
\[ \begin{bmatrix} 
C[0,1]  & \cdots & 0 & 0 & 0 & 0 & \cdots  & 0 \\ 
\vdots  & \cdots & \ddots & \ddots & \ddots & \ddots & \cdots &  \vdots \\
0  & \cdots & C[0,1] & 0 & 0 & 0 & \cdots  & 0 \\ 
0  & \cdots & 0 & C[0,1] & C[0,1] & 0 & \cdots  & 0 \\ 
0  & \cdots & 0 & C[0,1] & C[0,1] & 0 & \cdots  & 0 \\ 
0  & \cdots & 0 & 0 & 0 & C[0,1] & \cdots  & 0 \\ 
\vdots  & \cdots & \ddots & \ddots & \ddots & \ddots & \cdots  & \vdots \\
0  & \cdots & 0 & 0 & 0 & 0 & \cdots  & C[0,1] \end{bmatrix}. \]
Here the first row with two copies of $C[0,1]$ is the $j$-th row of the matrix. Of course, this only make sense if $j \leq k-1$. 

\begin{example}
For $k= 2$ there is only one possible algebra \[ A_{1,2} = \begin{bmatrix} C[0,1] & C[0,1] \\ C[0,1] & C[0,1]\end{bmatrix} \] but when $k=3$ there are two options
\begin{align*}
A_{1,3} & = \begin{bmatrix} C[0,1] & C[0,1] & 0 \\ C[0,1] & C[0,1] & 0 \\ 0 & 0 & C[0,1] \end{bmatrix} \mbox{ and } \\
A_{2,3} & = \begin{bmatrix} C[0,1] & 0 & 0 \\ 0 & C[0,1] & C[0,1] \\ 0 & C[0,1] & C[0,1] \end{bmatrix}
\end{align*}
\end{example}

Next we write $A_{k,k}$ for the algebra
\[ \begin{bmatrix} 
C[0,1] & 0 & 0 & \cdots & 0 & 0 & C[0,1] \\
0 & C[0,1] & 0 & \cdots & 0 & 0 & 0 \\
0 & 0 & C[0,1] & \ddots & 0 & 0 & 0 \\
\vdots & \vdots & \vdots & \ddots & \vdots & \vdots & \vdots \\
0 & 0 & 0 & \cdots & C[0,1] & 0 & 0 \\
0 & 0 & 0 & \cdots & 0 & C[0,1] & 0 \\
C[0,1] & 0 & 0 & \cdots & 0 & 0 & C[0,1] \end{bmatrix}. \]

We denote by $e_{i,j}$ the elementary matrix in $M_k\otimes C[0,1]$ with $1$ in the $i$-$j$ position and zero everywhere else, and we let 
$D_k$ be the diagonal subalgebra of $A_{j,k}$ spanned by $ \{ e_{i,i} \}_{i=1}^k$.

We collect a number of facts about these algebras.

\begin{lemma} $A_{m,k} \cong A_{n,k}$ for all $m, n \leq k$ \end{lemma}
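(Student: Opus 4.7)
The plan is to realize each isomorphism $A_{m,k} \cong A_{n,k}$ as the restriction of an inner automorphism of $M_k \otimes C[0,1]$ coming from a permutation unitary. This is the natural move: every $A_{j,k}$ sits in $M_k \otimes C[0,1]$ supported on the full diagonal together with one symmetric pair of off-diagonal entries, and permuting the basis of $\mathbb{C}^k$ moves that off-diagonal pair without disturbing the diagonal support.

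First I would dispose of the generic case $m, n \leq k-1$. Choose any permutation $\sigma$ of $\{1, \ldots, k\}$ with $\sigma(m) = n$ and $\sigma(m+1) = n+1$, and let $U_\sigma \in M_k$ be the associated permutation unitary, so that $(U_\sigma \otimes 1)(e_{i,j} \otimes f)(U_\sigma^* \otimes 1) = e_{\sigma(i),\sigma(j)} \otimes f$. Applying this to the generators of $A_{m,k}$, namely $\{e_{i,i} \otimes C[0,1] : 1 \leq i \leq k\}$ together with $e_{m,m+1} \otimes C[0,1]$ and $e_{m+1,m} \otimes C[0,1]$, produces exactly the generating set of $A_{n,k}$, yielding a $*$-isomorphism $A_{m,k} \to A_{n,k}$.

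For the boundary case of comparing some $A_{m,k}$ with $m \leq k-1$ to $A_{k,k}$, the ``block locus'' is the pair $\{1,k\}$ rather than $\{n, n+1\}$, but the argument is identical: pick $\sigma$ with $\{\sigma(m), \sigma(m+1)\} = \{1,k\}$ and conjugate. Composing the two cases gives $A_{m,k} \cong A_{n,k}$ for all $m, n \leq k$.

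I do not foresee a real obstacle; the only point demanding care is to recognize the pattern defining $A_{k,k}$ as ``a single $M_2 \otimes C[0,1]$ block living on the index pair $\{1,k\}$, plus isolated diagonal copies of $C[0,1]$,'' which puts it on equal footing with the other $A_{j,k}$. If an abstract statement is preferred, one can equivalently observe that every $A_{j,k}$ is $*$-isomorphic to $(M_2 \otimes C[0,1]) \oplus C[0,1]^{\oplus(k-2)}$ by reading off the minimal central projections, but the explicit unitary implementation above is likely the version to carry into the free product constructions that follow.
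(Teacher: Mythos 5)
Your proof is correct and follows essentially the same route as the paper, which simply invokes a unitary equivalence induced by elementary row and column operations in $M_k \otimes C[0,1]$; your permutation unitary $U_\sigma \otimes 1$ is exactly such a unitary, spelled out explicitly. The extra care you take with the boundary case $A_{k,k}$ (the block supported on $\{1,k\}$) is a useful elaboration but not a departure from the paper's argument.
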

\begin{proof} 
The isomorphism is a unitary equivalence induced by elementary row and column operations in $M_k \otimes C[0,1]$.
\end{proof}

We can also see that $A_{j,k} \cong (C[0,1] \otimes M_2) \oplus C[0,1] \oplus C[0,1] \oplus \cdots \oplus C[0,1]$ with $k-2$ copies of $C[0,1]$.  In what follows we will often write proofs as if we are restricting to $j< k$ but all of the results will follow for the case $j=k$ because of the preceding lemma (or by natural changes to the indicated maps).

There is an injective $*$-homomorphism $\lambda_{j,k}: C[0,1] \rightarrow A_{j,k}$ given by 
\[ \lambda_{j,k} (f) = 
\begin{bmatrix} f  & \cdots & 0 & 0 & 0 & 0 & \cdots  & 0 \\ 
\vdots &  \cdots & \ddots & \ddots & \ddots & \ddots & \cdots &  \vdots \\
0  & \cdots & f & 0 & 0 & 0 & \cdots  & 0 \\ 
0  & \cdots & 0 & f & 0 & 0 & \cdots  & 0 \\ 
0  & \cdots & 0 & 0 & f & 0 & \cdots  & 0 \\ 
0  & \cdots & 0 & 0 & 0 & f & \cdots  & 0 \\ 
\vdots  & \cdots & \ddots & \ddots & \ddots & \ddots & \cdots  & \vdots \\
0  & \cdots & 0 & 0 & 0 & 0 & \cdots  & f \end{bmatrix} \] 

Next we let $E_{j,k}: A_{j,k} \rightarrow A_{j,k}$ be given by sending 
\[ \begin{bmatrix} f_{1,1}  & \cdots & 0 & 0 & 0 & 0 & \cdots  & 0 \\ 
0  & \cdots & 0 & 0 & 0 & 0 & \cdots & 0 \\ 
\vdots  & \cdots & \ddots & \ddots & \ddots & \ddots & \cdots &  \vdots \\
0  & \cdots & f_{j-1,j-1} & 0 & 0 & 0 & \cdots  & 0 \\ 
0  & \cdots & 0 & f_{j,j} & f_{j,j+1} & 0 & \cdots  & 0 \\ 
0  & \cdots & 0 & f_{j+1,j} & f_{j+1,j+1}  & 0 & \cdots  & 0 \\ 
0  & \cdots & 0 & 0 & 0 & f_{j+2, j+2} & \cdots  & 0 \\ 
\vdots  & \cdots & \ddots & \ddots & \ddots & \ddots & \cdots &  \vdots \\
0  & \cdots & 0 & 0 & 0 & 0 & \cdots  & 0 \\ 
0  & \cdots & 0 & 0 & 0 & 0 & \cdots  & f_{k,k} \end{bmatrix} \] to the matrix \[ 
\begin{bmatrix} f_{1,1}  & \cdots & 0 & 0 & 0 & 0 & \cdots  & 0 \\ 
0  & \cdots & 0 & 0 & 0 & 0 & \cdots  & 0 \\ 
\vdots  & \cdots & \ddots & \ddots & \ddots & \ddots & \cdots &  \vdots \\
0  & \cdots & f_{1,1} & 0 & 0 & 0 & \cdots  & 0 \\ 
0  & \cdots & 0 & f_{1,1} & 0 & 0 & \cdots  & 0 \\ 
0  & \cdots & 0 & 0 & f_{1,1}  & 0 & \cdots  & 0 \\ 
0  & \cdots & 0 & 0 & 0 & f_{1,1} & \cdots & 0 \\ 
\vdots  & \cdots & \ddots & \ddots & \ddots & \ddots & \cdots &  \vdots \\
0  & \cdots & 0 & 0 & 0 & 0 & \cdots  & 0 \\ 
0  & \cdots & 0 & 0 & 0 & 0 & \cdots  & f_{1,1} \end{bmatrix}. \]

\begin{lemma} The map $ E_{j,k}$ is a conditional expectation onto $\lambda_{j,k}(C[0,1])$ such that $E_{j,k} (D_k) = \lambda_{j,k} \left(\mathbb{C} \cdot 1_{[0,1]}\right)$. \end{lemma}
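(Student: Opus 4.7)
My plan is to recognize that $E_{j,k}$ factors as $\lambda_{j,k} \circ \phi$, where $\phi : A_{j,k} \to C[0,1]$ is the map that extracts the $(1,1)$-entry, and then to invoke Tomiyama's theorem on contractive projections onto $C^*$-subalgebras. Direct comparison of the matrix formula defining $E_{j,k}$ with that of $\lambda_{j,k}$ shows the image of $E_{j,k}$ lies in (and in fact equals) $\lambda_{j,k}(C[0,1])$, so the only things left to verify are that $E_{j,k}$ is a contractive idempotent with this range.

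For idempotence, I would compute $E_{j,k}(\lambda_{j,k}(f)) = \lambda_{j,k}(f)$ by reading off that the $(1,1)$-entry of $\lambda_{j,k}(f)$ is $f$, so $E_{j,k}$ acts as the identity on its range. For contractivity, I would note that $\|E_{j,k}(a)\| = \|a_{1,1}\|_{C[0,1]} = \sup_{t \in [0,1]} |a_{1,1}(t)| \leq \sup_{t}\|a(t)\|_{M_k} = \|a\|$, since $\lambda_{j,k}$ is isometric and the $(1,1)$-entry of a matrix is dominated in absolute value by its operator norm. Tomiyama's theorem then promotes this contractive idempotent to a genuine conditional expectation, automatically yielding complete positivity and the bimodule property.

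For the second assertion, I would evaluate $E_{j,k}$ on a general diagonal element $a = \sum_{i=1}^k \alpha_i e_{i,i} \in D_k$ with $\alpha_i \in \mathbb{C}$. The $(1,1)$-entry of $a$ is the constant function $\alpha_1 \cdot 1_{[0,1]}$, so $E_{j,k}(a) = \alpha_1 \lambda_{j,k}(1_{[0,1]})$. As $\alpha_1$ ranges over $\mathbb{C}$, the image $E_{j,k}(D_k)$ is therefore exactly $\mathbb{C} \cdot \lambda_{j,k}(1_{[0,1]}) = \lambda_{j,k}(\mathbb{C} \cdot 1_{[0,1]})$.

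I do not expect a real obstacle here: the argument is essentially matrix bookkeeping together with Tomiyama's theorem. The only mild subtlety is the case $j = 1$, where the $(1,1)$-entry lies inside the $2 \times 2$ block and $\phi$ is a u.c.p.\ compression rather than a $*$-homomorphism; however, the plan above never uses multiplicativity of $\phi$, only its contractivity together with the fact that $E_{j,k}$ fixes its range, so nothing is lost. If one prefers, the preceding lemma reduces $j = 1$ to any $j \geq 2$ of one's choosing, where the first row and column genuinely decouple.
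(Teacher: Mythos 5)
Your proof is correct and takes essentially the same route as the paper: linearity and idempotence by direct computation, contractivity via the observation that $\|E_{j,k}(X)\| = \|f_{1,1}\| = \|e_{1,1}Xe_{1,1}\| \leq \|X\|$, and then Tomiyama's theorem to upgrade the contractive projection to a conditional expectation. The only difference is that you also write out the evaluation of $E_{j,k}$ on $D_k$, which the paper's proof leaves implicit.
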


\begin{proof}
That $E_{j,k}$ is linear and $E_{j,k}^2 = E_{j,k}$ follows from direct computation. By Tomiyama's Theorem \cite[Theorem 1.5.10]{BrownOzawa} we need only verify that $E_{j,k}$ is contractive. So let $X = [f_{m,n}] \in A_{j,k}$ and notice that 
\begin{align*} \left\| E_{j,k} \left( X \right)  \right\|& = \| f_{1,1} (e_{1,1} + e_{2,2} + \cdots + e_{k,k} \| \\ & \leq  
 \| f_{1,1} \| \| e_{1,1} + e_{2,2} + \cdots + e_{k,k} \| \\ 
& = \| f_{1,1} \| \\ & = \| e_{1,1} X e_{1,1} \| \\ 
& \leq \| X \|, \end{align*}
verifying that $E_{j,k}$ is contractive. \end{proof}

Thus we have the following commutative diagrams:
\[ \xymatrix{ A_{j,k} & \ar[l] D_k \\ C[0,1] \ar^{\lambda_{j,k}}[u]  & \ar[l] \ar_{\lambda_{j,k}|_{ \mathbb{C} \cdot 1_{[0,1]}}}[u] \mathbb{C} } \]
of which all of the maps are injections, and
\[ \xymatrix{ C[0,1] \otimes M_k \ar_{E_{j,k}}[d] & \ar[l]\ar^{E_{j,k}|_{D_k}}[d] D_k \\ C[0,1]   & \ar[l] \mathbb{C} } \]
with conditional expectations on the vertical arrows. Applying \cite[Proposition 2.4]{ArmstrongDykemaExelLi} we conclude the following

\begin{proposition}
The induced map $\Lambda_{j,j+1}: C[0,1] \free{\mathbb{C}} C[0,1] \rightarrow A_{j,k} \free{D_k} A_{j+1,k}$ is injective.
\end{proposition}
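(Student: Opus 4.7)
The plan is to verify that the data assembled in the two preceding lemmas and recorded in the two commutative squares just before the proposition satisfy the hypotheses of \cite[Proposition 2.4]{ArmstrongDykemaExelLi}, and then invoke that result. Concretely, I would first set up the amalgamation data symmetrically on both sides of the free product. For each $\ell \in \{j, j+1\}$, the injective $*$-homomorphism $\lambda_{\ell,k}: C[0,1] \hookrightarrow A_{\ell,k}$ carries $\mathbb{C} \cdot 1_{[0,1]}$ into $D_k$, which yields the commutative square of inclusions displayed above with $\ell$ in place of $j$. The second preceding lemma then provides the conditional expectation $E_{\ell,k}: A_{\ell,k} \to \lambda_{\ell,k}(C[0,1])$ whose restriction to $D_k$ is a conditional expectation onto $\lambda_{\ell,k}(\mathbb{C} \cdot 1_{[0,1]})$, giving the second commutative square.

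Next I would check the compatibility conditions across the two factors. Both $A_{j,k}$ and $A_{j+1,k}$ share the same diagonal subalgebra $D_k$, the two embeddings $\lambda_{j,k}$ and $\lambda_{j+1,k}$ carry the scalar subalgebra $\mathbb{C}\cdot 1_{[0,1]}$ to the unit $e_{1,1}+\cdots+e_{k,k}$ of $D_k$ in the same way, and the two restrictions $E_{j,k}|_{D_k}$ and $E_{j+1,k}|_{D_k}$ coincide, since each sends a diagonal matrix to the constant matrix determined by its $(1,1)$-entry. These are exactly the compatibility conditions required by the pushout-style hypothesis of \cite[Proposition 2.4]{ArmstrongDykemaExelLi}.

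Finally, with those verifications in hand, \cite[Proposition 2.4]{ArmstrongDykemaExelLi} directly yields that the induced $*$-homomorphism
\[
\Lambda_{j,j+1}: C[0,1] \free{\mathbb{C}} C[0,1] \longrightarrow A_{j,k} \free{D_k} A_{j+1,k}
\]
is injective. The main obstacle here is not computational but organizational: translating our two commutative squares into the precise framework demanded by \cite[Proposition 2.4]{ArmstrongDykemaExelLi}, so that its hypotheses apply with the right choice of subalgebras, inclusions, and conditional expectations. Once that translation is made explicit, every hypothesis is immediate from the definitions of $\lambda_{\ell,k}$, $E_{\ell,k}$, and $D_k$ already recorded.
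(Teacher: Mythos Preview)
Your proposal is correct and follows exactly the paper's approach: the paper simply sets up the two commutative squares and invokes \cite[Proposition 2.4]{ArmstrongDykemaExelLi} without further comment, while you spell out the compatibility checks (in particular that $E_{j,k}|_{D_k} = E_{j+1,k}|_{D_k}$) that make that citation legitimate. There is no substantive difference in method.
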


We will be focused on free products of multiple algebras at once, so we will need to extend the result of \cite{ArmstrongDykemaExelLi} to apply to multiple free products simultaneously. To do that we turn to a version of Boca's Theorem \cite{Boca} which was extended in \cite{DavidsonKakariadis}. Essentially it allows us to show that the ``free product'' of a finite number of conditional expectations is again a conditional expectation, given some additional hypotheses on the algebras. We can then use induction to extend \cite[Proposition 2.4]{ArmstrongDykemaExelLi}.

We refer the reader to the introduction of \cite{DavidsonKakariadis} for the relevant details on the construction of amalgamated free products. For our purposes, given unital C$^*$-algebras $C_i$ with common C$^*$-subalgebra $C$ the amalgamated free product $C_1 \free{C} C_2 \free{C} \cdots \free{C} C_n$ contains a dense subalgebra spanned by elements of the form $c_{i_1} c_{i_2} \cdots c_{i_m}$ where $c_{i_j}$ is in one of the $C_i$'s, but no two consecutive terms $c_{i_j}$ and $c_{i_{j+1}}$ are contained in the same $C_i$. Thus a continuous linear map with domain $C_1 \free{C} C_2 \free{C} \cdots \free{C} C_n$ is uniquely determined by its action on these generating elements. Now, when there exist conditional expectations $E_i: C_i \rightarrow C$, we can say more.  In this case the algebra $C_1 \free{C} C_2 \free{C} \cdots \free{C} C_n$ is the closure of \[ C \oplus \left( \oplus_{ 1 \leq p \leq \infty, i_k \neq i_{k+1}} \ker E_{i_1} \otimes \ker E_{i_2} \otimes \ker E_{i,p} \right). \] This allows \cite{DavidsonKakariadis} a stronger version of Boca's theorem which we will utilize in the following result. 

\begin{proposition} Consider a finite family of unital C$^*$-algebras with inclusions $ B_i \subseteq A_i$ and assume that there are common C$^*$-subalgebras $C' \subseteq B_i$ for all $i$, $C \subseteq A_i$ for all $i$ and $C' \subseteq C$. Assume for each $i$ there is a surjective conditional expectation $F_i: A_i \rightarrow C$. 
%and there is an inclusion $\lambda: B_1 \free{C'} B_2 \free{C'} \cdots \free{C'} B_k \rightarrow A_1 \free{C} A_2 \free{C} \cdots \free{C} A_k$. 
If there are conditional expectations $E_i: A_i \rightarrow B_i$ with $E_i(c) = E_j(c)$ for all $ c \in C$, and $E_i|_{C}: C \rightarrow C'$ is a surjective conditional expectation for each $i$, then there is a surjective conditional expectation \[ G: A_1 \free{C} A_2 \free{C} \cdots \free{C} A_k \rightarrow B_1 \free{C'} B_2 \free{C'} \cdots \free{C'} B_k \] such that $E_i = G|_{A_i}$ for all $i$.  \end{proposition}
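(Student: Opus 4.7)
The plan is to construct $G$ as the amalgamated free product of the conditional expectations $E_i$ via the Davidson--Kakariadis extension of Boca's theorem, then verify the required properties directly. For the setup, the hypothesis $E_i(c) = E_j(c)$ for $c \in C$ means the common restriction $\varepsilon := E_i|_C : C \to C'$ is a single surjective conditional expectation independent of $i$. The surjective $F_i : A_i \to C$ supply the reduced free product decomposition of $A_1 \free{C} A_2 \free{C} \cdots \free{C} A_k$ as the closure of
\[ C \oplus \left( \oplus_{p \geq 1, \, i_1 \neq \cdots \neq i_p} \ker F_{i_1} \otimes_C \cdots \otimes_C \ker F_{i_p} \right). \]
Since $F_i(B_i) \subseteq C$, the composition $\widetilde{F}_i := \varepsilon \circ F_i|_{B_i} : B_i \to C'$ is contractive, completely positive, and satisfies $\widetilde{F}_i^2 = \widetilde{F}_i$ (using $F_i|_C = \mathrm{id}$ and $\varepsilon|_{C'} = \mathrm{id}$); by Tomiyama's theorem it is a conditional expectation, giving the analogous decomposition of the target $B_1 \free{C'} B_2 \free{C'} \cdots \free{C'} B_k$.

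Next I would define $G$ on alternating words by
\[ G(a_{i_1} a_{i_2} \cdots a_{i_m}) = E_{i_1}(a_{i_1}) E_{i_2}(a_{i_2}) \cdots E_{i_m}(a_{i_m}), \]
viewed as an element of $B_1 \free{C'} \cdots \free{C'} B_k$, and $G|_C = \varepsilon$. Boundedness and complete positivity of this prescription is precisely the output of the Davidson--Kakariadis extension of Boca's theorem applied to the family $(E_i)$, which has common restriction $\varepsilon$ onto $C' \subseteq B_i$. To see $G$ is a conditional expectation: for an alternating word $b_{i_1} \cdots b_{i_m}$ with $b_{i_j} \in B_{i_j}$, since $E_{i_j}|_{B_{i_j}} = \mathrm{id}$ we compute $G(b_{i_1} \cdots b_{i_m}) = b_{i_1} \cdots b_{i_m}$; by density and continuity $G$ restricts to the identity on the target free product, hence $G^2 = G$, and combined with contractivity Tomiyama's theorem confirms that $G$ is a conditional expectation. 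Surjectivity then follows because the range of a conditional expectation is automatically closed and already contains every $B_i = E_i(A_i)$ together with $C'$, which generate the target.

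The principal obstacle is invoking the Davidson--Kakariadis theorem in a form that matches our hypotheses. Their construction is typically framed for CEs landing in a common amalgamation subalgebra, while here the $E_i$ land in distinct $B_i$ that share only $C'$; both the anchor CEs $F_i$ (to provide the Fock decomposition on the domain) and the surjectivity of $\varepsilon$ (to link the two amalgamating subalgebras $C$ and $C'$) are essential to bridge this gap. In particular, one must carefully track how the kernel decomposition produced by $F_i$ on $A_i$ maps under $E_i$ into the corresponding kernel decomposition produced by $\widetilde{F}_i$ on $B_i$, so that the word-by-word definition of $G$ actually extends to a bounded map on the entire amalgamated free product rather than merely the algebraic one.
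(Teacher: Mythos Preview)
Your proposal follows essentially the same route as the paper: compose each $E_i$ with the inclusion $\iota_i : B_i \hookrightarrow B_1 \free{C'} \cdots \free{C'} B_k$, invoke the Davidson--Kakariadis extension of Boca's theorem to obtain a single unital completely positive map on the amalgamated free product satisfying the multiplicative formula on reduced words, use the Fock decomposition furnished by the $F_i$ to control the range, and finish by checking idempotence via Tomiyama. The only refinement in the paper that you leave implicit is that Davidson--Kakariadis is stated for maps into $B(\mathcal{H})$, so the paper first fixes a faithful representation $\pi$ of the target free product, applies the theorem to $\pi \circ \iota_i \circ E_i$, verifies (using the $F_i$-decomposition) that the resulting $\Phi$ has range inside $\operatorname{ran}\pi$, and then sets $G = \pi^{-1}\circ\Phi$; your ``principal obstacle'' paragraph correctly anticipates exactly this issue.
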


\begin{proof}
Since the maps $E_i$ are onto conditional expectations, they are completely positive and are the identity on their range which is $B_i$. Then using the natural inclusion maps $\iota_i: B_i \rightarrow B_1 \free{C'} B_2 \free{C'} \cdots \free{C'} B_n$, we define \[ G_i = \iota_i \circ E_i: A_i \rightarrow B_1 \free{C'} B_2 \free{C'} \cdots \free{C'} B_k \] and notice that by hypothesis $G_i|_C = G_j|_C$ for all $i,j$. 

Now let $\mathcal{H}$ be a Hilbert space so that there exists \[ \pi: B_1 \free{C'} B_2 \free{C'} \cdots \free{C'} B_k \rightarrow B(\mathcal{H}) \] which is a faithful $*$-representation. Then $ \pi \circ G_i$ is a unital completely positive map from $A_i \rightarrow B(\mathcal{H})$ such that $\pi \circ G_i|_C = \pi \circ G_j|_C$ is a $*$-representation for all $i, j$. It follows from \cite[Theorem 3.4]{DavidsonKakariadis} that there is a unital completely positive map $\Phi: A_1 \free{C} A_2 \free{C} \cdots \free{C} A_n \rightarrow B(\mathcal{H})$ such that $\Phi|_{A_i} = \pi \circ G_i$ and \[ \Phi(a_{j_1} a_{j_2} \cdots a_{j_k}) =  \pi \circ G_{j_1}(a_{j_1})\pi \circ G_{j_2})( a_{j_2}) \cdots \pi \circ G_{j_k}(a_{j_k}) \] when $ j_{i} \neq j_{i+1} $ and $ a_{j_i} \in \ker I_{j_i}$.

In particular, the existence of the conditional expectations $F_i: A_i \rightarrow C$ means that $A$ is the closed linear span of the space \[ C \oplus \left( \oplus_{m \geq 1} \ker E_{i_1} \otimes \cdots \otimes \ker E_{i_m} \right)\] and $\ker E_{i_j} \subseteq \ker \pi \circ G_{i_j}$. Applying the formula for $\Phi$ to elements of the form $b_0 b_{i_1} b_{i_2} \cdots b_{i_m}$ with $b_0 \in C'$, and $ b_{i_j} \in \ker F_j|_{B_j} $ and $i_{j} \neq i_{j+1}$ establishes that $\Phi$ is a surjective unital completely positive map onto $ \ran \pi$. It follows that $ G = \pi^{-1} \circ \Phi$ is a unital completely positive map from $A_1 \free{C} A_2 \free{C} \cdots \free{C} A_k \rightarrow B_1 \free{C'} B_2 \free{C'} \cdots \free{C'} B_k$. As it is also an idempotent we can conclude that $G$ is a conditional expectation. \end{proof}

In the absence of the conditional expectation $F_i$, we cannot establish the additional detail that the range of our extended map into $B(\mathcal{H}) $ has range inside the range of $\pi$ so that the argument breaks down. However for the algebras we're interested in there is a natural conditional expectation $F_{j,k}: A_{j,k} \rightarrow D_k$. given by \[ F_{j,k} \left( \sum_{r,s=1}^ke_{r,s} \otimes f_{r,s} \right) = \sum_{r=1}^k f_{r,r}(0)e_{r,r}, \] which allows us to apply the following extension of \cite[Proposition 2.4]{ArmstrongDykemaExelLi}.

\begin{theorem}
Consider a finite family of C$^*$-algebras with inclusions $ B_i \subseteq A_i$ and assume that there are common C$^*$-subalgebras $C' \subseteq B_i$ for all $i$, $C \subseteq A_i$ for all $i$ and $C' \subseteq C$. If there are conditional expectations $F_i: A_i \rightarrow C$ and $E_i: A_i \rightarrow B_i$ , the latter of which when restricted to $C$ is a conditional expectation onto $C'$ then the natural map \[ \lambda: B_1 \free{C'} B_2 \free{C'} \cdots \free{C'} B_n \rightarrow A_1 \free{C} A_2 \free{C} \cdots \free{C} A_n \] is injective.
\end{theorem}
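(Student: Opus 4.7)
The plan is to produce a left inverse for $\lambda$ and thereby establish injectivity. The tool is the preceding Proposition, which manufactures a conditional expectation from the larger free product onto the smaller one; composing with $\lambda$ and checking the composition is the identity on generators gives injectivity.

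Applying the preceding Proposition to the data of the theorem---the conditional expectations $F_i : A_i \to C$ and $E_i : A_i \to B_i$, whose restrictions to $C$ we take to be a common conditional expectation onto $C'$---produces a surjective conditional expectation
\[ G : A_1 \free{C} A_2 \free{C} \cdots \free{C} A_n \to B_1 \free{C'} B_2 \free{C'} \cdots \free{C'} B_n \]
with $G|_{A_i} = E_i$ for each $i$.

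Next I would verify that $G \circ \lambda$ is the identity on $B_1 \free{C'} B_2 \free{C'} \cdots \free{C'} B_n$. For each $i$ and each $b \in B_i$, the map $\lambda$ sends $b$ to its image under the inclusion $B_i \hookrightarrow A_i$ followed by the natural embedding into the free product; applying $G$ then yields $E_i(b) = b$, since $E_i$ is the identity on its range $B_i$. By the universal property of the amalgamated free product, agreement with the identity on each $B_i$ (on a common $C'$) determines the composition uniquely, so $G \circ \lambda$ is the identity. In particular, $\lambda$ is injective.

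The only subtlety I anticipate is aligning the hypotheses here with those of the preceding Proposition, specifically the requirement that the $E_i$ agree on $C$. In the intended applications (to the algebras $A_{j,k}$ and their diagonal subalgebras) this compatibility is built into the construction, and more generally it can be arranged by fixing a single conditional expectation $C \to C'$ in advance. Once this is settled, the theorem is essentially the standard observation that a map with a continuous left inverse is injective.
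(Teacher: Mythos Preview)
Your approach is different from the paper's, and more direct: the paper proves the theorem by induction on the number of factors, using the preceding Proposition at the inductive step to produce a conditional expectation from $A_1 \free{C} \cdots \free{C} A_{n-1}$ onto $B_1 \free{C'} \cdots \free{C'} B_{n-1}$, and then invoking the two-factor case of Armstrong--Dykema--Exel--Li to handle the pair $\bigl(A_1 \free{C} \cdots \free{C} A_{n-1},\, A_n\bigr)$. You instead apply the Proposition once to the full $n$-fold product to obtain $G$ and use it as a left inverse for $\lambda$. This is cleaner and avoids the induction entirely.

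There is, however, a genuine gap in your argument where you invoke the universal property to conclude $G \circ \lambda = \mathrm{id}$. The universal property of the amalgamated free product classifies \emph{$*$-homomorphisms} out of $B_1 \free{C'} \cdots \free{C'} B_n$, but $G$ is only a conditional expectation (hence unital completely positive), so $G \circ \lambda$ is a priori only a u.c.p.\ map. Agreement with the identity on each $B_i$ does not by itself force a u.c.p.\ map to be the identity on the free product. The fix is standard: since $(G \circ \lambda)|_{B_i} = \mathrm{id}_{B_i}$ is a $*$-homomorphism, each $B_i$ lies in the multiplicative domain of $G \circ \lambda$; as the $B_i$ generate the free product, $G \circ \lambda$ is multiplicative, hence a $*$-homomorphism, and \emph{now} the universal property yields $G \circ \lambda = \mathrm{id}$. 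With that sentence added, your argument is complete and arguably preferable to the paper's inductive route.
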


\begin{proof} The base case is \cite[Proposition 2.4]{ArmstrongDykemaExelLi} so we assume that it is true for $ 1 \leq k < n$ and consider the case of $k=n$. Then by the previous proposition there is a conditional expectation $E_{n-1}: A_1\free{C} A_2 \free{C} \cdots \free{C} A_{n-1} \rightarrow B_1 \free{C'} B_2 \free{C'} \cdots \free{C'} B_{n-1}$ which when restricted to $C$ is a conditional expectation onto $C'$. Then we apply the base case result to the following commutative diagram of C$^*$-algebras
\[ \xymatrix{ B_1 \free{C'} B_2 \free{C'} \cdots \free{C'} B_{n-1} \ar[d] & C' \ar[l] \ar[r] \ar[d] & B_n \ar[d] \\ A_1 \free{C} A_2 \free{C} \cdots \free{C} A_{n-1} & C \ar[l] \ar[r] & A_n } \] to get that the map $\lambda$ is indeed injective.
\end{proof}

Applying this to the free products we're considering the following is a direct application of the preceding.

\begin{corollary} There is a natural injection of the $j$-fold free product \[ C[0,1] \free{\mathbb{C}} C[0,1] \free{\mathbb{C}} \cdots \free{\mathbb{C}} C[0,1]\] into the algebra \[ A_{1,k} \free{D_k} A_{2, k} \free{D_k} \cdots \free{D_k} A_{j,k} \] for all $1 \leq j \leq k$.\end{corollary}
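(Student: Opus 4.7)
The plan is to apply the preceding Theorem directly, with the algebras of interest lined up as follows. For $1 \leq i \leq j$, set $A_i = A_{i,k}$ and $B_i = \lambda_{i,k}(C[0,1])$, with common subalgebras $C = D_k$ and $C' = \mathbb{C} \cdot 1$. The inclusions $C' \subseteq B_i$, $C \subseteq A_i$, and $C' \subseteq C$ are immediate: each $B_i$ is unital, $D_k$ sits diagonally inside each $A_{i,k}$, and $\mathbb{C} \cdot 1 \subseteq D_k$.

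Next I would identify the required conditional expectations. The maps $F_i := F_{i,k} : A_{i,k} \rightarrow D_k$ defined just before the Theorem are conditional expectations onto $C$ by construction. The maps $E_i := E_{i,k}: A_{i,k} \rightarrow \lambda_{i,k}(C[0,1]) = B_i$ are conditional expectations by the Lemma preceding Proposition 2.5. The same Lemma gives $E_{i,k}(D_k) = \lambda_{i,k}(\mathbb{C} \cdot 1_{[0,1]}) = C'$, and since the restriction of a conditional expectation remains completely positive, idempotent, and contractive, $E_i|_C$ is a surjective conditional expectation onto $C'$. (The compatibility condition $E_i|_C = E_j|_C$ needed in the intermediate Proposition is automatic, since both restrictions land in $\mathbb{C} \cdot 1$ and agree by the explicit form of $E_{i,k}$ on diagonal matrices, which sends $\sum_{r} f_{r,r} e_{r,r}$ to $f_{1,1} \cdot 1_{D_k}$ independently of $i$.)

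With these hypotheses verified, the Theorem applies directly and yields that the natural map
\[ \lambda : B_1 \free{C'} B_2 \free{C'} \cdots \free{C'} B_j \;\longrightarrow\; A_{1,k} \free{D_k} A_{2,k} \free{D_k} \cdots \free{D_k} A_{j,k} \]
is injective. Since each $B_i \cong C[0,1]$ and $C' \cong \mathbb{C}$, the left-hand side is canonically identified with the $j$-fold free product $C[0,1] \free{\mathbb{C}} C[0,1] \free{\mathbb{C}} \cdots \free{\mathbb{C}} C[0,1]$, giving the desired injection. There is no real obstacle, since the Theorem has been set up precisely to accommodate this situation; the only thing to double-check is that the restriction $E_{i,k}|_{D_k}$ genuinely factors as a conditional expectation onto $C'$, which is essentially the content of the second clause of the Lemma.
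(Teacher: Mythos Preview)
Your proposal is correct and takes essentially the same approach as the paper, which simply states that the corollary is ``a direct application of the preceding'' theorem without spelling out the details. You have carefully lined up the ingredients $B_i = \lambda_{i,k}(C[0,1])$, $C = D_k$, $C' = \mathbb{C}\cdot 1$, together with the conditional expectations $E_{i,k}$ and $F_{i,k}$ supplied earlier in the section, which is exactly what the paper intends.
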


We now consider the other main inclusion of free products that will play a result in our results. We start again with introducing some notation.
We let $B_{j,k}$ denote the subalgebra of $A_{j,k}$ of the form 
\[ \begin{bmatrix} 
\mathbb{C}  & \cdots & 0 & 0 & 0 & 0 & \cdots  & 0 \\ 
\vdots &  \cdots & \ddots & \ddots & \ddots & \ddots & \cdots  & \vdots \\
0  & \cdots & \mathbb{C} & 0 & 0 & 0 & \cdots  & 0 \\ 
0  & \cdots & 0 & \mathbb{C} & \mathbb{C} & 0 & \cdots  & 0 \\ 
0  & \cdots & 0 & \mathbb{C} & \mathbb{C} & 0 & \cdots  & 0 \\ 
0  & \cdots & 0 & 0 & 0 & \mathbb{C} & \cdots  & 0 \\ 
\vdots  & \cdots & \ddots & \ddots & \ddots & \ddots & \cdots  & \vdots \\
0  & \cdots & 0 & 0 & 0 & 0 & \cdots  & \mathbb{C} \end{bmatrix}. \]

Then we can see that as before $B_{m,k} \cong B_{n,k}$ for $m, n \leq k-1$ and that the algebra $B_{m,k} \cong M_2 \oplus \mathbb{C}^{k-2}$. In this context we do not need conditional expectations to get at our result.  In fact we have the following commutative diagram of inclusions of C$^*$-algebras
\[ \xymatrix{  A_{m,k} &  \\ & D_k \ar[ul] \ar[dl] \\ B_{m,k} \ar[uu] }. \] It is a straightforward induction argument using \cite[Theorem 4.2]{Pedersen} that yields the following theorem. 

\begin{theorem}
The induced map \[ \sigma: B_{1,k} \free{D_k} B_{2,k} \free{D_k} \cdots \free{D_k} B_{l,k} \rightarrow A_{1,k} \free{D_k} A_{2,k} \free{D_k} \cdots \free{D_k} B_{l,k} \] is injective for $ 2 \leq l \leq k$.
\end{theorem}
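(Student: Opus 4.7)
The plan is induction on $l$, using associativity of amalgamated free products at each step together with \cite[Theorem 4.2]{Pedersen}. (I read the target of $\sigma$ as $A_{1,k} \free{D_k} A_{2,k} \free{D_k} \cdots \free{D_k} A_{l,k}$, since the displayed $B_{l,k}$ in the final slot of the codomain appears to be a typo given the overall structure of the section.)

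For the base case $l=2$, the commutative triangle of inclusions displayed immediately before the theorem statement supplies exactly the data needed by Pedersen's Theorem 4.2: the common C$^*$-subalgebra $D_k$ sits inside both $B_{m,k}$ and $A_{m,k}$, and the inclusion $B_{m,k} \hookrightarrow A_{m,k}$ restricts to the identity on $D_k$. Applying \cite[Theorem 4.2]{Pedersen} to this pair of inclusions for $m = 1,2$ yields the desired injection $B_{1,k} \free{D_k} B_{2,k} \hookrightarrow A_{1,k} \free{D_k} A_{2,k}$.

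For the inductive step, assume the statement for $l-1$. By associativity of the amalgamated free product over the common unital subalgebra $D_k$, there are natural identifications
\[ B_{1,k} \free{D_k} \cdots \free{D_k} B_{l,k} \cong \left( B_{1,k} \free{D_k} \cdots \free{D_k} B_{l-1,k} \right) \free{D_k} B_{l,k} \]
and analogously for the $A_{i,k}$. The inductive hypothesis realizes $B_{1,k} \free{D_k} \cdots \free{D_k} B_{l-1,k}$ as a C$^*$-subalgebra of $A_{1,k} \free{D_k} \cdots \free{D_k} A_{l-1,k}$ containing $D_k$, and this inclusion fixes $D_k$. Pairing this with the original inclusion $B_{l,k} \hookrightarrow A_{l,k}$ (which also fixes $D_k$), a second application of \cite[Theorem 4.2]{Pedersen} to the resulting pushout diagram produces the injection from the composite free product on the $B$ side into that on the $A$ side.

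The only mild obstacle is confirming that the hypotheses of Pedersen's theorem persist through the inductive step, i.e.\ that the inclusion provided by the inductive hypothesis really does restrict to the identity on $D_k$. This is ensured because $D_k$ embeds canonically into each free product via (say) the first factor, and this canonical copy is identified by the inductive hypothesis. The rest of the verification is routine, and the theorem follows.
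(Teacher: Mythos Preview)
Your proposal is correct and matches the paper's approach exactly: the paper dispenses with this theorem in a single sentence, noting only that it follows by a straightforward induction using \cite[Theorem 4.2]{Pedersen}. Your write-up simply unpacks that induction (including the reasonable correction of the apparent typo in the codomain), so there is nothing to add.
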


Now we can see from \cite{Duncangraphs} and noting that $B_{i,k}$ is the graph C$^*$-algebra corresponding to $k$ vertices and a single edge connecting the $(i-1)$-vertex to the $i$-th vertex to see that \[ B_{1,k} \free{D_k} B_{2,k} \free{D_k} \cdots \free{D_k} B_{k-1,k} \cong M_k \] and that \[ B_{1,k} \free{D_k} B_{2,k} \free{D_k} \cdots \free{D_k} B_{k,k} \] is the graph algebra associated to a cycle of length $k$ and hence \cite{Raeburn} is isomorphic to $M_k \otimes C(\mathbb{T})$.

To understand the free products \[ A_{1,k} \free{D_k} \cdots \free{D_k} A_{k-1,k} \] and \[ A_{1,k} \free{D_k} \cdots \free{D_k} A_{k,k} \] we will first notice that the range of the two injections $\pi$ and $\sigma$ together include generating sets for the free product. Hence we can consider how elements in the ranges of these two injections ``multiply'' together to get a sense of the entire free product algebra. In particular, we know the structure of the ranges of $\pi$ (free products of copies of $C[0,1]$) and $\sigma$ (full matrix algebras or full matrix algebras tensored with $C(\mathbb{T})$). The interesting piece is what happens when we consider elements of the range of $\pi$ multiplied against elements of the range of $\sigma$. Because we are going to investigate this carefully we will consider the first algebra and then utilize that in understanding the second algebra.

\begin{theorem} The algebra $A_{1,k} \free{D_k} A_{2,k} \free{D_k} \cdots \free{D_k} A_{k-1,k}$ is isomorphic to \[ \left( C[0,1] \free{\mathbb{C}} C[0,1] \free{\mathbb{C}} \cdots \free{\mathbb{C}} C[0,1] \right) \otimes M_k \] where there are $k-1$ copies of $C[0,1]$ in the free product. \end{theorem}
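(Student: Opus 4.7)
My plan is to first construct a surjection $\Psi \colon P \to R \otimes M_k$, where $R = C[0,1] \free{\mathbb{C}} \cdots \free{\mathbb{C}} C[0,1]$ ($k-1$ copies) and $P = A_{1,k} \free{D_k} \cdots \free{D_k} A_{k-1,k}$, and then to prove injectivity via a corner analysis. Under the tensor decomposition $A_{j,k} \cong C[0,1] \otimes B_{j,k}$ (with $\lambda_{j,k}(C[0,1])$ the first factor and $B_{j,k}$ embedded in $M_k$ as the second), define $\Psi_j(f \otimes b) := \iota_j(f) \otimes b$, where $\iota_j \colon C[0,1] \to R$ is the $j$-th coordinate inclusion. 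Each $\Psi_j|_{D_k}$ is the standard embedding $D_k \hookrightarrow 1 \otimes M_k$, independent of $j$, so the universal property of the amalgamated free product assembles the $\Psi_j$ into $\Psi$. Surjectivity is immediate: the image contains $\iota_j(f) \otimes 1$ (together generating $R \otimes 1$) and $1 \otimes B_{j,k}$ (together generating $1 \otimes M_k$ by the preceding theorem).

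Next, I would pass to the corner of $P$ at $e_{1,1} \in D_k$. The injection $\sigma \colon M_k \hookrightarrow P$ supplies a full unital system of matrix units, so the standard matrix decomposition yields $P \cong M_k \otimes Q$ where $Q = e_{1,1} P e_{1,1}$; analogously $R \otimes M_k \cong M_k \otimes R$. Via these identifications $\Psi$ becomes $\mathrm{id}_{M_k} \otimes \alpha$ for an induced compression $\alpha \colon Q \to R$, so it suffices to show $\alpha$ is an isomorphism. Because $\lambda_{j,k}(f)$ is central in $A_{j,k}$ it commutes with $e_{1,1}$, so $\beta_j(f) := \lambda_{j,k}(f)\, e_{1,1}$ is a unital $*$-homomorphism $C[0,1] \to Q$. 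The universal property of $R$ (as a free product over $\mathbb{C}$) assembles these into $\beta \colon R \to Q$ with $\alpha \circ \beta = \mathrm{id}_R$ on generators, so $\beta$ is injective. Equivalently, the conditional expectation $G \colon P \to \pi(R)$ produced by the Proposition (applied with $E_i = E_{j,k} \colon A_{j,k} \to \lambda_{j,k}(C[0,1])$) satisfies $G \circ \beta = \mathrm{id}_R$ via $R$-bimodularity together with the computation $G(e_{1,1}) = E_{j,k}(e_{1,1}) = 1_P$.

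The main obstacle is surjectivity of $\beta$, equivalently injectivity of $\alpha$. My approach is to expand a generic corner element $e_{1,1}(a_1 a_2 \cdots a_n) e_{1,1}$ with $a_\ell \in A_{j_\ell,k}$, $j_\ell \neq j_{\ell+1}$, by inserting $I = \sum_i e_{i,i}$ between consecutive letters. Each surviving summand corresponds to a walk $1 = i_0, i_1, \ldots, i_n = 1$ whose steps $(i_{\ell-1}, i_\ell)$ lie in the $C[0,1]$-positions of $A_{j_\ell,k}$---either a diagonal step $i_{\ell-1}=i_\ell$ or a block off-diagonal step $\{i_{\ell-1}, i_\ell\} = \{j_\ell, j_\ell+1\}$. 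Writing $a_\ell = f_\ell \otimes b_\ell$ under $A_{j_\ell,k} \cong C[0,1] \otimes B_{j_\ell,k}$, the diagonal segments of such a walk contribute factors of the form $\beta_{j_\ell}(f_\ell)$ (possibly conjugated by matrix units to move between corners), while the off-diagonal block steps contribute matrix units from $\sigma(M_k)$ which, upon the corner compression at $e_{1,1}$, collapse into scalar factors or are absorbed into neighboring $\beta$-terms. Careful bookkeeping on these walks shows $Q = \beta(R)$, completing the proof that $P \cong M_k \otimes Q \cong M_k \otimes R \cong R \otimes M_k$.
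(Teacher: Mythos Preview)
Your map $\Psi$ is not injective, so neither the corner map $\alpha$ nor the walk argument can do what you claim. The problem is visible already from a commutator: $\Psi$ sends $\lambda_{j,k}(f)\mapsto \iota_j(f)\otimes 1$ and each matrix unit $e_{r,s}\in B_{i,k}$ to $1\otimes e_{r,s}$, so $\Psi\bigl([\lambda_{j,k}(f),\,e_{r,s}]\bigr)=0$ for every $i,j,r,s$. But in $P$ these commutators are generally nonzero when $i\neq j$. For $k=3$, take $\pi_1\colon A_{1,3}\to M_3$ given by evaluating the $M_2\otimes C[0,1]$ block at $s$ and the $(3,3)$ copy of $C[0,1]$ at $s'$, and $\pi_2\colon A_{2,3}\to M_3$ by evaluating the $(1,1)$ copy of $C[0,1]$ at $t$ and the $M_2\otimes C[0,1]$ block at $u$. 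These agree on $D_3$, hence glue to a representation $\pi$ of $P$, and one computes
\[
\pi\bigl([\lambda_{1,3}(f),\,e_{2,3}]\bigr)=\bigl(f(s)-f(s')\bigr)\,e_{2,3},
\]
which is nonzero for generic $f$ once $s\neq s'$. Thus $[\lambda_{1,3}(f),e_{2,3}]$ is a nonzero element of $\ker\Psi$.

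The same representation kills your walk reduction directly. The corner element $x=e_{1,2}\,\lambda_{2,3}(g)\,e_{2,1}\in Q$ (an off-diagonal step from $B_{1,3}$, a diagonal step in $A_{2,3}$, and back) satisfies $\alpha(x)=\iota_2(g)=\alpha(\beta_2(g))$, yet $\pi(x)=g(u)e_{1,1}$ while $\pi(\beta_2(g))=g(t)e_{1,1}$; so $x\neq\beta_2(g)$ and $x\notin\beta(R)$. The assertion that off-diagonal steps ``collapse into scalar factors or are absorbed into neighbouring $\beta$-terms'' is therefore false: conjugating $e_{2,2}\lambda_{2,3}(g)e_{2,2}$ by matrix units from $B_{1,3}$ does \emph{not} reproduce $e_{1,1}\lambda_{2,3}(g)e_{1,1}$ inside the amalgamated free product. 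The underlying issue is that $\lambda_{j,k}(f)$ is central only in $A_{j,k}$, not in $P$, whereas your $\Psi$ forces it to commute with the full matrix-unit copy $\sigma(M_k)$. The paper's argument is different in exactly this respect: it only uses that the range of the injection $\pi$ commutes with the \emph{diagonal} units $F_{i,i}\in D_k$, and works with the resulting corner decomposition rather than with a map of the form $\Psi$.
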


\begin{proof}

The range of $\sigma$ creates a set of matrix units inside \[ \mathcal{A}:= A_{1,k} \free{D_k} \cdots \free{D_k} A_{k-1,k}\] which we will denote by $F_{i,j}$. We know that the range of $\sigma$ is the linear span of the $F_{i,j}$s, that \[ F_{i,j}F_{m,n} = \begin{cases} F_{i,n} & m =n \\ 0 & \mbox{ otherwise} \end{cases}, \]that $ 1_{\mathcal{A}} = F_{1,1} + F_{2,2} + \cdots +  F_{k,k}$, and $D_k $ is the span of $\{ F_{1,1}, F_{2,2}, \cdots, F_{k,k} \}$. More importantly since amalgamation occurs over $D_k$ and as $A_{i,k}$ is a direct sum and a tensor product, from the universal property of tensor products, we know that inside $A_{i,k}$ the image of $f \in C[0,1]$ will commute with all of the $F_{i,i}$ and \[ f = \sum_{i=}^k F_{i,i} f F_{i,i}. \] Since this is true for each $ f \in C[0,1]$ it must be true for any $F$ in the range of $\pi$, and hence the range of $\pi$ must be ``diagonal'' with respect to the matrix units for $M_n$ inside the range of $\sigma$. Then for a generic multiplication $F_{i,j} F = F_{i,j} F_{j,j} f F_{j,j}$ and $ FF_{i,j} = F_{i,i} F F_{i,i} F_{i,j}$ and hence the algebra $\mathcal{A}$ is generated inside a matrix algebra over $C[0,1] \free{\mathbb{C}} C[0,1] \free{\mathbb{C}} \cdots \free{\mathbb{C}} C[0,1]$. That it is onto requires only establishing that there is a representation of the free product algebra onto the matrix algebra, which follows from the universal property of the free product with the fact that each of the $B_{i,k}$s has a natural inclusion into the matrix algebra which is onto a generating set of that matrix algebra.
\end{proof}

We can now use a result of Pedersen \cite[Theorem 5.5]{Pedersen} to get an analogous result in the remaining case.
 
\begin{theorem}\label{diagram} The algebra $A_{1,k} \free{D_k} A_{2,k} \free{D_k} \cdots \free{D_k} A_{k,k}$ is isomorphic to  \[\left( \left( C[0,1] \free{\mathbb{C}} C[0,1] \free{\mathbb{C}} \cdots \free{\mathbb{C}} C[0,1] \right)\free{\mathbb{C}} C(\mathbb{T}) \right)\otimes M_k \] where there are $k$ copies of $C[0,1]$ in the free product. \end{theorem}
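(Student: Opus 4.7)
The strategy is to extend the argument of the previous theorem to handle the additional factor $A_{k,k}$. Setting $\mathcal{A} := A_{1,k} \free{D_k} A_{2,k} \free{D_k} \cdots \free{D_k} A_{k,k}$, the key new input is that the injection $\sigma$ coming from the $B_{i,k}$'s now lands in the cycle graph algebra $B_{1,k} \free{D_k} \cdots \free{D_k} B_{k,k} \cong M_k \otimes C(\mathbb{T})$ rather than in $M_k$. This produces inside $\mathcal{A}$ both a full set of matrix units $\{F_{i,j}\}$ (giving the $M_k$ factor) and a unitary $u$ generating the $C(\mathbb{T})$ factor, where $u$ commutes with each $F_{i,j}$ since it sits in the tensor-partner slot of $M_k \otimes C(\mathbb{T})$. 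Simultaneously, the earlier corollary yields an injection $\pi: C[0,1] \free{\mathbb{C}} \cdots \free{\mathbb{C}} C[0,1] \hookrightarrow \mathcal{A}$ of the $k$-fold free product, with one $C[0,1]$ coming from each $A_{i,k}$.

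Next I would replay the diagonal-commutation argument of the previous proof. Each $C[0,1]$-copy from $A_{i,k}$ commutes with every $F_{j,j}$ because of the direct-sum-with-tensor-factor structure of $A_{i,k}$, so every element $F$ in the range of $\pi$ satisfies $F = \sum_{i=1}^k F_{i,i} F F_{i,i}$. Together with the (trivial) decomposition of $u$, the matrix-entry computations $F_{i,j} F = F_{i,j} F_{j,j} f F_{j,j}$ and $F F_{i,j} = F_{i,i} F F_{i,i} F_{i,j}$ from the previous proof show that $\mathcal{A}$ sits inside a matrix algebra $M_k(\mathcal{C})$ whose coefficient subalgebra $\mathcal{C}$ is generated by the image of $\pi$ and the unitary $u$. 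Because the $C[0,1]$-copies and $u$ come from distinct summands of the amalgamated free product over $D_k$ and share no relations beyond scalars, $\mathcal{C}$ must be exactly $\left( C[0,1] \free{\mathbb{C}} \cdots \free{\mathbb{C}} C[0,1] \right) \free{\mathbb{C}} C(\mathbb{T})$ with $k$ copies of $C[0,1]$.

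Surjectivity then follows from the universal property of the free product: each $A_{i,k}$ (including $A_{k,k}$, where the $C[0,1]$-factor maps to the $k$-th copy and the wrap-around $B_{k,k}$-piece is encoded using the $C(\mathbb{T})$ generator together with the matrix units $F_{1,k}$ and $F_{k,1}$) has a natural inclusion into the target algebra, and these inclusions agree on $D_k$, so they assemble to a surjection onto a generating set. The main obstacle — and the reason the author invokes Pedersen's \cite[Theorem 5.5]{Pedersen} — is to rigorously separate the outer amalgamation over $D_k$ (which produces the $\otimes M_k$ factor) from the inner amalgamation over $\mathbb{C}$ (which produces the $\free{\mathbb{C}} C(\mathbb{T})$ structure in the coefficient), ensuring that no extra relations from the shared $D_k$ collapse the resulting algebra. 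Pedersen's pushout machinery should supply exactly this decomposition and promote the surjection into an isomorphism.
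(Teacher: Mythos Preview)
Your outline correctly identifies the ingredients --- the matrix units $F_{i,j}$, the unitary $u$ coming from the cycle, and the $k$ diagonal copies of $C[0,1]$ --- and the surjectivity argument via the universal property is fine. The genuine gap is in the sentence ``Because the $C[0,1]$-copies and $u$ come from distinct summands of the amalgamated free product over $D_k$ and share no relations beyond scalars, $\mathcal{C}$ must be exactly $\left(C[0,1]\free{\mathbb{C}}\cdots\free{\mathbb{C}}C[0,1]\right)\free{\mathbb{C}}C(\mathbb{T})$.'' This is precisely the hard direction (injectivity), and the stated reason is not correct: the unitary $u$ is \emph{not} from a single summand distinct from the $C[0,1]$'s. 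Inside $B_{1,k}\free{D_k}\cdots\free{D_k}B_{k,k}\cong M_k\otimes C(\mathbb{T})$ the generator $1\otimes z$ is assembled from the off-diagonal matrix units of \emph{all} the $B_{i,k}$'s, and each $B_{i,k}$ sits inside the very same $A_{i,k}$ that contributes the $i$-th copy of $C[0,1]$. So ``distinct summands'' does not apply, and nothing you have written rules out hidden relations between $u$ and the $C[0,1]$ copies in the corner algebra $\mathcal{C}$. Your final paragraph concedes this and defers to Pedersen, but never says \emph{how} Pedersen's theorem is to be applied; that is exactly the content that is missing.

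The paper's proof is structurally different and does not attempt your direct matrix-unit argument for the full $k$-fold product. Instead it sets up three explicit pushout squares. First, Pedersen's Theorem~5.5 is used to pass from the pushout $\left(C[0,1]^{\ast(k-1)}\right)\free{\mathbb{C}}C(\mathbb{T})$ to its $M_k$-amplification. Then a separate pushout is written with $A_{1,k}\free{D_k}\cdots\free{D_k}A_{k,k}$ in the bottom-right corner, whose left column and top row are identified --- via the \emph{previous} theorem applied to $k-1$ factors --- with $M_k$ over $(k-1)$-fold free products containing a $C(\mathbb{T})$. Matching these two pushout squares, and tracking which copy of $C[0,1]$ is omitted under each embedding of the common upper-left corner, is what actually pins down the extra free $C[0,1]$ factor and yields the isomorphism. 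If you want to repair your argument, you should either reproduce this pushout bookkeeping or give an independent proof that the canonical map $\left(C[0,1]^{\ast k}\right)\free{\mathbb{C}}C(\mathbb{T})\to F_{1,1}\mathcal{A}F_{1,1}$ is injective; the latter does not follow from the diagonality observations alone.
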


\begin{proof}Consider the commutative pushout diagram
\[ 
\xymatrix{ \mathbb{C} \ar[d] \ar[r] & C(\mathbb{T}) \ar[d] \\ 
C[0,1] \free{\mathbb{C}} C[0,1] \free{\mathbb{C}} \cdots \free{\mathbb{C}} C[0,1] \ar[r] & \left( C[0,1] \free{\mathbb{C}} C[0,1] \free{\mathbb{C}} \cdots \free{\mathbb{C}} C[0,1] \right) \free{\mathbb{C}} C( \mathbb{T}) 
} 
\]
and notice that the maps are proper and hence \cite[Theorem 5.5]{Pedersen} tells us that the following commutative diagram is a pushout 
\[ 
\xymatrix{ \mathbb{C}\otimes M_k \ar[d] \ar[r] & C(\mathbb{T}) \otimes M_k \ar[d] \\ 
C[0,1] \free{\mathbb{C}} \cdots \free{\mathbb{C}} C[0,1]  \otimes M_k \ar[r] & \left( C[0,1]  \free{\mathbb{C}} \cdots \free{\mathbb{C}} C[0,1] \right) \free{\mathbb{C}} C( \mathbb{T}) \otimes M_k.} 
\]
In other words \[ \left(C[0,1] \free{\mathbb{C}} C[0,1] \free{\mathbb{C}} \cdots \free{\mathbb{C}} C[0,1]  \otimes M_k \right) \free{\mathbb{C} \otimes M_k} \left( C(\mathbb{T}) \otimes M_k \right)\] is isomorphic to \[ \left(C[0,1] \free{\mathbb{C}} C[0,1] \free{\mathbb{C}} \cdots \free{\mathbb{C}} C[0,1] \free{\mathbb{C}} C( \mathbb{T}) \right) \otimes M_k.\]

We also have a third pushout diagram which puts the algebras we just recognized in the bottom left and top right corner and the algebra we wish to describe into the bottom right hand corner:
\[
\xymatrix{  B_{1,1} \free{D_k} A_{2,k} \free{D_k} \cdots \free{D_k} A_{k-1,k} \free{D_k} B_{k,k} \ar[r] \ar[d] & B_{1,k} \free{D_k} A_{2,k} \free{D_k} \cdots \free{D_k} A_{k,k} \ar[d] \\ A_{1,k} \free{D_k} A_{2,k} \free{D_k} \cdots \free{D_k} A_{k-1,k} \free{D_k} B_{k,k} \ar[r] &  A_{1,k} \free{D_k} A_{2,k} \free{D_k} \cdots \free{D_k} A_{k,k}. }
\]

The algebra in the bottom left corner of the second pushout diagram is isomorphic to both \[A_{1,k} \free{D_k} A_{2,k} \free{D_k} \cdots \free{D_k} A_{k-1,k} \] and \[ A_{2,k} \free{D_k} A_{3,k} \free{D_k} \cdots \free{D_k} A_{k,k}. \] The algebra in the upper right corner of the same pushout diagram is isomorphic to $B_{1,k} \free{D_k} B_{2,k} \free{D_K} \cdots \free{D_K} B_{k,k}$. Alternatively, we can recognize these two algebras as, respectively \[ \left( A_{1,k} \free{D_k} \cdots \free{D_k} A_{k-1,k} \free{D_k} D_k \right) \free{B_{1,1} \free{D_k} \cdots \free{D_k} B_{k-1,k-1} \free{D_k} D_k} \left( B_{1,1} \free{D_k} \cdots \free{D_k} B_{k,k} \right)\] and 
\[ \left( D_k \free{D_k} A_{2,k} \free{D_k} \cdots \free{D_k} A_{k,k} \right) \free{D_k \free{D_k} B_{2,2} \free{D_k} \cdots \free{D_k} B_{k,k}} \left( B_1 \free{D_k} \cdots \free{D_k} B_k \right),\] which can be rewritten as \[ B_{1,k} \free{D_k} A_{2,k} \free{D_k} A_{3,k} \free{D_k} \cdots \free{D_k} A_{k,k}\] and \[ A_{1,k} \free{D_k} A_{2,k} \free{D_k} \cdots \free{D_k} A_{k-1,k} \free{D_k} B_{k,k}.\]

So we can use the previous theorem to turn the third pushout diagram into the following:
\[ 
\xymatrix{ B_{1,1} \free{D_k} A_{2,k} \free{D_k} \cdots \free{D_k} A_{k-1,k} \free{D_k} B_{k,k} \ar[r] \ar[d] & M_k \left( C[0,1] \free{\mathbb{C}} \cdots \free{\mathbb{C}} C[0,1] \free{\mathbb{C}} C(\mathbb{T}) \right) \ar[d] \\ M_k\left( C[0,1] \free{\mathbb{C}} \cdots \free{\mathbb{C}} C[0,1] \free{\mathbb{C}} C(\mathbb{T}) \right)  \ar[r] & A_{1,k} \free{D_k} A_{2,k} \free{D_k} \cdots \free{D_k} A_{k,k}. }
\]

Where there are $k-1$ copies of $C[0,1]$ in both the upper right and bottom left corners of the commutative diagram. We need only consider how \[ B_{1,1} \free{D_k} A_{2,k} \free{D_k} A_{3,k} \free{D_k} \cdots \free{D_k} A_{k-1,k} \free{D_k} B_{k,k} \] embeds into the two different corners of the commutative diagram. Looking at the previous theorem on the generators of this subalgebra we can follow the maps to see that for the top row the embedding is into everything but the first copy of $ C[0,1]$ and the column embedding is into everything but the last copy of $C[0,1]$. In particular, \[ B_{1,1} \free{D_k} A_{2,k} \free{D_k} A_{3,k} \free{D_k} \cdots \free{D_k} A_{k-1,k} \free{D_k} B_{k,k} \] is isomorphic to \[ \left( C[0,1] \free{\mathbb{C}} C[0,1] \free{\mathbb{C}} \cdots \free{\mathbb{C}} C[0,1] \free{\mathbb{C}} C(\mathbb{T}) \right) \otimes M_k, \] with $k-2$ copies of $ C[0,1]$.  The description of \[ A_{1,k} \free{D_k} A_{2,k} \free{D_k} \free{D_k} \cdots \free{D_k} A_{k,k} \] now follows.

\end{proof}

We can now consider these results in the context of determining the algebras C$^*_{\rm max} (T_n)$ and C$^*_{\rm max} (B_n)$. 

\section{Some maximal covers}

We start with a result of Blecher \cite{Blecher} which gives the maximal C$^*$-cover of the upper triangular $2 \times 2$ matrix algebra. 

\begin{proposition} We denote by $ \omega$ the function $\sqrt{1-t} \in C[0,1]$ then the inclusion \[ \begin{bmatrix} \lambda_{1,1} & \lambda_{1,2} \\ 0 & \lambda_{2,2} \end{bmatrix} \mapsto \begin{bmatrix} \lambda_{1,1} & \lambda_{1,2} \omega \\ 0 & \lambda_{2,2} \end{bmatrix} \] induces a completely contractive map which generates  \[ {\rm C}^*_{\rm max} (T_2) = \begin{bmatrix} C[0,1] & \omega C[0,1] \\ \omega C[0,1] & C_0[0,1] \end{bmatrix}. \]
\end{proposition}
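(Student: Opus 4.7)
The plan is to invoke Blecher's characterization of $\mathrm{C}^*_{\max}(T_2)$ as the universal unital $\mathrm{C}^*$-algebra generated by a contractive nilpotent of degree two, and then to identify the concretely described algebra $\mathcal{A}$ inside $M_2(C[0,1])$ with this universal object, with the nilpotent generator given by $\omega\,e_{12}$.

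First, I would verify that the stated map $\iota : T_2 \to \mathcal{A}$ is completely isometric. For each $t \in [0,1]$ and each $[X_{ij}] \in M_n(T_2)$ one can write $M_n(\iota)([X_{ij}])(t) = \omega(t)\,[X_{ij}] + (1-\omega(t))\,[\mathrm{diag}(X_{ij})]$, and since $\omega(t) \in [0,1]$ and $\|[\mathrm{diag}(X_{ij})]\| \leq \|[X_{ij}]\|$, the pointwise norms are bounded by $\|[X_{ij}]\|_{M_n(T_2)}$. Evaluation at $t = 0$ (where $\omega(0) = 1$) recovers $[X_{ij}]$ verbatim, giving the reverse inequality. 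Next, I would confirm that the $\mathrm{C}^*$-subalgebra of $M_2(C[0,1])$ generated by $\mathrm{ran}(\iota)$ equals $\mathcal{A}$: from $(\omega e_{12})^*(\omega e_{12}) = (1-t)e_{22}$ together with the constant $e_{22}$, continuous functional calculus produces $C[0,1]\cdot e_{22}$ in the $(2,2)$ corner; the $(1,1)$ corner is handled symmetrically; and products of the form $(f e_{11})(\omega e_{12})(g e_{22})$ fill the off-diagonal slots with $\omega C[0,1]$.

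For the universal property, let $\psi : T_2 \to B(H)$ be a completely contractive unital representation and set $P_i := \psi(e_{ii})$, $N := \psi(e_{12})$. Then $N = P_1 N P_2$, $\|N\| \leq 1$, and $N^2 = 0$, so $a := P_1 - NN^*$ and $b := P_2 - N^*N$ are positive contractions in the corners $P_1 B(H) P_1$ and $P_2 B(H) P_2$ respectively. From $NP_2 = N$ together with $N(N^*N) = (NN^*)N$ one obtains $Nb = aN$, which upgrades by functional calculus to $Nf(b) = f(a)N$ for every $f \in C[0,1]$. Define $\pi : \mathcal{A} \to B(H)$ on matrix-unit-type entries by $\pi(f e_{11}) = f(a)$, $\pi(f e_{22}) = f(b)$, and $\pi(\omega g\,e_{12}) = g(a)N = Ng(b)$, extended linearly and by adjoints. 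The intertwining identity gives multiplicativity on products, and $\pi \circ \iota = \psi$ follows by inspection on the generators of $T_2$.

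The main obstacle I anticipate is the bookkeeping around the apparent asymmetry between $C[0,1]$ in the $(1,1)$ slot and $C_0[0,1]$ in the $(2,2)$ slot of the stated algebra---reconciling this with the fact that $\iota(e_{22})$ is a constant in the $(2,2)$ corner. One must also verify that the formula for $\pi$ on an off-diagonal element $\omega g\,e_{12}$ is independent of the choice of representative $g \in C[0,1]$. Both points reduce to careful applications of continuous functional calculus exploiting $N^2 = 0$, after which the argument is essentially a repackaging of Blecher's original result.
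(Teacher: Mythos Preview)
The paper does not actually prove this proposition: it is stated as a result of Blecher with a citation, and no argument is supplied. So there is no ``paper's own proof'' to compare against beyond the reference.

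Your sketch is the natural way to establish Blecher's identification and is correct in outline. Two remarks on the points you flag as obstacles.

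First, the asymmetry you notice is simply a typo in the displayed statement: the $(2,2)$ entry should be $C[0,1]$, not $C_0[0,1]$. Indeed $\iota(e_{22})$ is the constant function $1$ in that corner, and together with $N^*N=(1-t)e_{22}$ continuous functional calculus already yields all of $C[0,1]\cdot e_{22}$, so the generated $\mathrm{C}^*$-algebra is the symmetric one. With this correction your generation argument goes through verbatim.

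Second, on the off-diagonal the map $g\mapsto \omega g$ is injective on $C[0,1]$ (since $\omega$ vanishes only at $t=1$), so there is no ambiguity of representative; the genuine point is boundedness of $\omega g\mapsto g(a)N$ on the dense set $\{\omega g:g\in C[0,1]\}$ inside the ideal $\{f\in C[0,1]:f(1)=0\}$. Using $NN^*=P_1-a$ and that $g(a)$ commutes with $NN^*$, one computes
\[
\|g(a)N\|^2=\|g(a)NN^*g(a)^*\|=\big\|\,(|g|^2(1-\cdot))(a)\,\big\|\le\|\omega g\|_\infty^2,
\]
which gives the required contractivity and hence a continuous extension of $\pi$ to all of $\mathcal{A}$. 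After that, multiplicativity and $\pi\circ\iota=\psi$ follow from the intertwining relation $Nf(b)=f(a)N$ exactly as you indicate.
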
 

The set $\omega C[0,1]$ is a principal ideal which can be identified as those functions in $C[0,1]$ which are $0$ when evaluated at $1$. A key element of our analysis is that C$^*_{\rm max} (T_2)$ can be generated by matrices of the form \[ \begin{bmatrix} f_{1,1}(t) & 0 \\ 0 & 0 \end{bmatrix}, \begin{bmatrix} 0 & 0 \\ 0 & f_{2,2}(t) \end{bmatrix}, \mbox{ and } \begin{bmatrix} 0 & \omega \\ 0 & 0 \end{bmatrix} \] where $f_{i,i} \in C[0,1]$. Combining the previous result with another result of Blecher \cite{Blecher} we can find the maximal C$^*$-covers of upper triangular $n \times n$ matrix algebras and certain semicrossed products studied by Peters in \cite{Peters}.  To do this we recognize that these algebras arise as graph operator algebras \cite{Kribs} corresponding to the directed graphs $L_n$ and $C_n$, respectively, where $L_n$ is the directed graph with $n$ vertices $v_1, v_2 ,\cdots, v_n$ and $n-1$ edges $e_1, e_2, \cdots, e_{n-1} $ where $s(e_i) = v_i$ and $r(e_i) = v_{i+1}$ and $C_n$ is the $n$-cycle graph. The key point that we use is from \cite{Duncangraphs} which applies since each edge has unique range. We can summarize the graph algebra results with the following restatements of \cite[Theorem 4]{Duncangraphs} in this specific context.

\begin{theorem} Let $A_1= T_2 \oplus \mathbb{C}^{n-2}, A_2 = \mathbb{C} \oplus T_2 \oplus \mathbb{C}^{n-3}, \cdots A_{n-1} = \mathbb{C}^{n-2} \oplus T_2$ viewing each as subalgebras of $M_n$, then \[ T_n = A(L_n) = A_1 \free{D_n} A_2 \free{D_n} \cdots \free{D_n} A_{n-1}, \] where amalgamation occurs over the subalgebra of $n \times n$ diagonal matrices.  If we further write $A_n$ for the matrix algebra \[ \begin{bmatrix} \lambda_{1,1} & 0 & 0 & \cdots & 0 & 0 \\ 0 & \lambda_{2,2} & 0 & \cdots & 0  & 0 \\ \vdots & \vdots & \ddots & \cdots & \vdots & \vdots \\ \lambda_{n,1} & 0 & 0 & \cdots & 0 & \lambda_{n,n} \end{bmatrix} \subseteq M_n \] then \[ A(C_n) = A_1 \free{D_n} A_2 \free{D_n} \cdots \free{D_n} A_{n-1} \free{D_n} A_n. \]
\end{theorem}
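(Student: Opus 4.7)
The plan is to treat this theorem as a direct application of the graph decomposition result cited from \cite[Theorem 4]{Duncangraphs}. That result says (roughly) that whenever a directed graph $G$ has the property that every edge has unique range --- meaning no vertex is the target of more than one edge --- then the operator algebra $A(G)$ decomposes as an amalgamated free product of the operator algebras of its single-edge subgraphs, amalgamated over the common subalgebra generated by the vertex projections. My strategy is simply to verify the hypotheses for $L_n$ and $C_n$, identify each single-edge subgraph with the claimed concrete matrix building block, and read off the conclusion.

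First I would check the unique-range hypothesis. For $L_n$, vertex $v_1$ is not in the range of any edge and each $v_{i+1}$ is the range of $e_i$ alone, so the hypothesis is trivially satisfied. For $C_n$, each vertex $v_j$ is the range of exactly one edge (the unique edge whose source is $v_{j-1}$, indices mod $n$), so again the hypothesis holds. Thus the cited decomposition applies to both graphs.

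Next I would identify the pieces. Let $G_i$ be the subgraph of $L_n$ (or $C_n$) with all $n$ vertices but only the edge $e_i$. Its operator algebra $A(G_i)$ is generated by $n$ orthogonal vertex projections together with a single partial-isometry/edge generator satisfying the usual graph relations; under the natural embedding into $M_n$ that sends the $j$-th vertex projection to the diagonal matrix unit $e_{j,j}$, this $A(G_i)$ sits as a copy of $T_2$ in the $(i,i{+}1)$-block together with $\mathbb{C}\cdot e_{j,j}$ for each other $j$. That is precisely the $A_i$ of the statement. For the extra edge $e_n$ of $C_n$ with $s(e_n)=v_n$ and $r(e_n)=v_1$, the same construction places $T_2$ in the corners indexed by $(1,n)$ and yields exactly the matrix algebra $A_n$ described. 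In both cases the common subalgebra generated by the vertex projections is the diagonal $D_n$. Substituting into the cited theorem immediately gives the two claimed free-product decompositions.

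The main obstacle --- which is largely bookkeeping rather than deep --- is establishing that the universal operator algebra of each single-edge subgraph $G_i$ really does coincide with the concrete matrix algebra $A_i \subseteq M_n$, i.e.\ verifying that the canonical embedding into $M_n$ is completely isometric on the generators and is universal for the defining graph relations. This is the point at which Blecher's identification of $T_2$ as the operator algebra of a contractive degree-two nilpotent is invoked, together with the fact that isolated vertices contribute only a direct summand of $\mathbb{C}$. Once this concrete-vs.-universal identification is in place for each $A_i$, matching amalgamation over $D_n$ with the vertex-projection subalgebra of the cited theorem is immediate, and the two stated isomorphisms follow.
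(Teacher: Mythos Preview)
Your proposal is correct and is exactly the approach the paper takes: the paper states this theorem as a direct restatement of \cite[Theorem 4]{Duncangraphs}, noting only that the hypothesis ``each edge has unique range'' holds for $L_n$ and $C_n$. Your write-up simply makes explicit the hypothesis check and the identification of each single-edge subgraph algebra with the corresponding $A_i \subseteq M_n$, which the paper leaves implicit.
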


We know from combining universal properties that maximal C$^*$-covers of free products, amalgamated over common C$^*$-subalgebras, are the free products of maximal C$^*$-covers amalgamated over the same C$^*$-subalgebra \cite[Proposition 2.2]{Blecher}. Combining this fact with the previous theorem yields the following. 

\begin{theorem} For $ n \geq 3$ \[ {\rm C}^*_{\rm max} (T_n) =  {\rm C}^*_{\rm max} (A_1) \free{D_n} {\rm C}^*_{\rm max} (A_2) \free{D_n} \cdots \free{D_n} {\rm C}^*_{\rm max} (A_{n-1}) \] and for $ n \geq 2$ \[ {\rm C}^*_{\rm max} (A(C_n)) = {\rm C}^*_{\rm max} (A_1) \free{D_n} {\rm C}^*_{\rm max} (A_2) \free{D_n} \cdots \free{D_n} {\rm C}^*_{\rm max} (A_{n-1}) \free{D_n} {\rm C}^*_{\rm max} (A_n) \]
\end{theorem}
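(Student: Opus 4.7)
The proof will follow immediately by combining the previous theorem with Blecher's \cite[Proposition 2.2]{Blecher}. From the previous theorem we have the decompositions $T_n = A_1 \free{D_n} A_2 \free{D_n} \cdots \free{D_n} A_{n-1}$ and $A(C_n) = A_1 \free{D_n} \cdots \free{D_n} A_n$, each amalgamated over the diagonal C$^*$-subalgebra $D_n$. Since $D_n$ is itself a C$^*$-algebra and sits as a common $*$-subalgebra of every $A_i$, the hypotheses of Blecher's result are in place.

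My plan is to proceed by induction on the number of free-product factors, using associativity of amalgamated free products over a common C$^*$-subalgebra. The base case is precisely Blecher's \cite[Proposition 2.2]{Blecher}: for two operator algebras $B_1, B_2$ containing a common C$^*$-subalgebra $D$, one has ${\rm C}^*_{\rm max}(B_1 \free{D} B_2) \cong {\rm C}^*_{\rm max}(B_1) \free{D} {\rm C}^*_{\rm max}(B_2)$. For the inductive step I would regroup
\[ A_1 \free{D_n} \cdots \free{D_n} A_{k+1} = \left( A_1 \free{D_n} \cdots \free{D_n} A_k \right) \free{D_n} A_{k+1} \]
and apply the inductive hypothesis to the left factor followed by the base case. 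Taking $k = n-2$ for $T_n$ and $k = n-1$ for $A(C_n)$ gives the stated formulas.

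The argument is entirely formal and the crux is just the interplay of universal properties. Concretely, any completely contractive representation $\rho$ of the operator-algebra amalgamated free product corresponds to a family $\rho_i \colon A_i \rightarrow B(\mathcal{H})$ of completely contractive representations that agree on $D_n$; each $\rho_i$ extends uniquely to a $*$-representation $\tilde{\rho}_i \colon {\rm C}^*_{\rm max}(A_i) \rightarrow B(\mathcal{H})$, and the $\tilde{\rho}_i$ still agree on $D_n$ because the inclusion $D_n \hookrightarrow A_i$ is already a $*$-homomorphism and so remains one after composition with the canonical embedding $A_i \hookrightarrow {\rm C}^*_{\rm max}(A_i)$. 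The universal property of the C$^*$-algebraic amalgamated free product on the right-hand side then assembles the $\tilde{\rho}_i$ into a single $*$-representation that extends $\rho$, which is what is required. No real obstacle arises; the only subtle bookkeeping point is the consistent identification of $D_n$ inside each ${\rm C}^*_{\rm max}(A_i)$, and this is automatic because $D_n$ is itself a C$^*$-algebra so it embeds into each ${\rm C}^*_{\rm max}(A_i)$ as the same copy of $D_n$.
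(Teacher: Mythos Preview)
Your proposal is correct and is essentially the same approach the paper takes: the paper simply states that combining the previous theorem's free-product decompositions with \cite[Proposition 2.2]{Blecher} (that maximal C$^*$-covers commute with amalgamated free products over a common C$^*$-subalgebra) yields the result. Your induction and universal-property sketch just unpack that citation in more detail than the paper does.
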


It is a straightforward application of the fact the maximal C$^*$-cover of a C$^*$-algebra is the C$^*$-algebra and that maximal C$^*$-covers of direct sums are direct sums of the C$^*$-covers to see that for $ 1 \leq i \leq n-1$, 
C$^*_{\rm max} (A_i) = \mathbb{C}^{i-1} \oplus C^*_{\rm max} (T_2) \oplus \mathbb{C}^{n-(i+1)}$ and 
\[ {\rm C}^*_{\rm max} (A_n) = \begin{bmatrix} C[0,1] & 0 & 0 & \cdots & 0 & \omega C[0,1] \\ 0 & 0 & 0 & \cdots & 0  & 0 \\ \vdots & \vdots & \ddots & \cdots & \vdots & \vdots \\ \omega C[0,1] & 0 & 0 & \cdots & 0 & C[0,1] \end{bmatrix}. \] 
Viewing each of these algebras as subalgebras of $M_n \otimes C[0,1]$ we see how the algebras from the previous section come into play. 

\begin{proposition}
The maximal C$^*$-cover of $T_n$ and $A(C_k)$ are subalgebras of \[ \left(C[0,1] \free{\mathbb{C}} C[0,1] \free{\mathbb{C}} \cdots \free{\mathbb{C}} C[0,1] \right) \otimes M_n \] and \[ \left( C[0,1] \free{\mathbb{C}} C[0,1] \free{\mathbb{C}} \cdots \free{\mathbb{C}} C[0,1] \free{\mathbb{C}} C( \mathbb{T}) \right) \otimes M_k,\] respectively.
\end{proposition}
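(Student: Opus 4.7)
The plan is to combine the free-product decomposition of $C^*_{\max}(T_n)$ from the preceding theorem with the structural isomorphisms identifying $A_{1,n} \free{D_n} \cdots \free{D_n} A_{n-1,n}$ as $(C[0,1] \free{\mathbb{C}} \cdots \free{\mathbb{C}} C[0,1]) \otimes M_n$ and, for the cyclic case, Theorem \ref{diagram}. Since those isomorphisms are already in hand, it suffices to produce a $*$-monomorphism of the $D_n$-amalgamated free product of the $C^*_{\max}(A_i)$'s into the $D_n$-amalgamated free product of the $A_{i,n}$'s and then invoke the isomorphism from the previous section.

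Concretely, the first step is to observe that each $C^*_{\max}(A_i) = \mathbb{C}^{i-1} \oplus C^*_{\max}(T_2) \oplus \mathbb{C}^{n-i-1}$ embeds as a C$^*$-subalgebra of $A_{i,n}$ inside $M_n \otimes C[0,1]$: the scalar summands embed as constants in the corresponding $C[0,1]$-summands, and Blecher's description realizes $C^*_{\max}(T_2)$ as a subalgebra of $M_2 \otimes C[0,1]$. The analogous inclusion $C^*_{\max}(A_n) \hookrightarrow A_{n,n}$ handles the cyclic case. The second step is to check that the inclusions are compatible on the common $D_n$ of diagonal scalar matrices. The third step is to apply the injection theorem from Section 2 to conclude that the induced map
\[ C^*_{\max}(A_1) \free{D_n} \cdots \free{D_n} C^*_{\max}(A_{n-1}) \to A_{1,n} \free{D_n} \cdots \free{D_n} A_{n-1,n} \]
is injective, and similarly for the cyclic case. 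Finally, composing with the already-established isomorphism yields the desired inclusion into $(C[0,1] \free{\mathbb{C}} \cdots \free{\mathbb{C}} C[0,1]) \otimes M_n$, respectively into $((C[0,1] \free{\mathbb{C}} \cdots \free{\mathbb{C}} C[0,1]) \free{\mathbb{C}} C(\mathbb{T})) \otimes M_k$.

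The main obstacle I anticipate is verifying the hypotheses of the injection theorem, specifically the existence of conditional expectations $E_i: A_{i,n} \to C^*_{\max}(A_i)$ restricting to the identity on $D_n$. The expectations $F_i: A_{i,n} \to D_n$ are already given by the paper's explicit formula, but $E_i$ reduces on the middle $2\times 2$ block to a contractive idempotent $M_2(C[0,1]) \to C^*_{\max}(T_2)$ fixing the diagonal, which in turn demands a contractive idempotent $C[0,1] \to \omega C[0,1]$ on the off-diagonal entries. Because $\omega C[0,1]$ is a non-unital subalgebra strictly contained in $C_0[0,1]$ that does not appear to be complemented in any obvious contractive way, the existence of such an idempotent is delicate. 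If a direct construction of $E_i$ fails, an alternative would be an ad hoc generator-level argument inside the target free product — mimicking the matrix-unit analysis used in the proof of the structure theorem for $A_{1,n} \free{D_n} \cdots \free{D_n} A_{n-1,n}$, but restricted to the C$^*$-subalgebra generated by the images of the $C^*_{\max}(A_i)$'s — and comparing against the universal property of $C^*_{\max}(T_n)$.
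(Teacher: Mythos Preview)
Your overall plan --- embed each $C^*_{\max}(A_i)$ into $A_{i,n}$, deduce an injection of the amalgamated free products, and then compose with the structural isomorphism --- is exactly the paper's strategy. The difference is in which injection result you invoke, and this is where you make your life unnecessarily hard.

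You reach for the conditional-expectation theorem from Section~2 (the extension of \cite[Proposition~2.4]{ArmstrongDykemaExelLi}), which is designed for the situation where the amalgamation subalgebras \emph{differ}: $C' \subsetneq C$. That is why it demands expectations $E_i : A_i \to B_i$ restricting to an expectation $C \to C'$. Here, however, the amalgamation is over the \emph{same} subalgebra $D_n$ on both sides: $D_n \subseteq C^*_{\max}(A_i) \subseteq A_{i,n}$. This is precisely the setting of Pedersen's pushout result \cite[Theorem~4.2]{Pedersen}, already used in Section~2 for the embedding $\sigma : B_{1,k} \free{D_k} \cdots \free{D_k} B_{l,k} \hookrightarrow A_{1,k} \free{D_k} \cdots \free{D_k} A_{l,k}$, where the paper explicitly remarks that ``we do not need conditional expectations to get at our result.'' The same one-line induction on Pedersen applies verbatim to the inclusions $C^*_{\max}(A_i) \subseteq A_{i,n}$ over $D_n$, and that is what the paper's proof does.

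So your worry about producing a contractive idempotent $C[0,1] \to \omega C[0,1]$ is well-founded --- such a projection does not exist (consider $f(t)=1-2t$) --- but it is also irrelevant: the obstacle disappears once you use the correct tool. Replace your third step with an appeal to Pedersen's theorem (or equivalently the argument for $\sigma$), and the proof is complete without any further work.
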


\begin{proof}
Since all of the algebras C$^*_{\rm max} (A_i)$ contain the diagonal algebras $D_n$ and amalgamation for the maximal C$^*$-algebras and the algebra $A$ occur over the same C$^*$-subalgebra, $D_n$, we can apply the result of Pedersen \cite[Theorem 5.5]{Pedersen} directly. 
\end{proof}

In theory, this gives us a practical method of computing the maximal C$^*$-covers for these algebras. We just determine the subalgebra generated by the generating subsets of the algebras that make up the free product. However this analysis is not necessarily amenable to a nice closed form solution. We will start by illustrating this result in the context of $T_3$ where the computations are manageable. We are dealing with multiple copies of $C[0,1]$ so we will introduce some notation to keep track of where algebra elements lie.

Considering $C[0,1] \free{\mathbb{C}} C[0,1]$ the two algebras separately are continuous functions in $t$ and hence the free product can be viewed as functions in two noncommuting self-adjoint variables $t_1$ and $t_2$. In general, we will write $C\langle t_1, t_2, \cdots, t_n \rangle$ as shorthand for $C[0,1] \free{\mathbb{C}} C[0,1] \free{\mathbb{C}} \cdots \free{\mathbb{C}} C[0,1]$ so that we can track which copy of $C[0,1]$ a function might appear in. Similarly, we will write $\omega_i$ to represent $\sqrt{1-t_i}$ in the $i$-th copy of $C[0,1]$, since it plays an important role in the maximal C$^*$-cover.

For a collection of elements $\{ f_1, f_2, \cdots, f_j \} \in C\langle t_1, t_2, \cdots, t_n \rangle$ we write 
\begin{itemize} \item $(f_i) C\langle t_1, t_2, \cdots, t_n \rangle$ to denote the closed subspace consisting of all elements of the form $f_i A$ where $A \in C\langle t_1, t_2, \cdots, t_n \rangle$,
\item $C\langle t_1, t_2, \cdots, t_n \rangle (f_i)$ to denote the closed subspace consisting of all elements of the form $A f_i$ where $A \in C\langle t_1, t_2, \cdots, t_n \rangle$,
\item and, $(f_i) C \langle t_1, t_2, \cdots, t_n \rangle (f_j)$ to denote the closed subspace consisting of all elements of the form $f_i A f_j$ where $A \in C\langle t_1, t_2, \cdots, t_n \rangle$. Note here that $i$ may equal $j$.
\end{itemize}

Having established the notation we provide the following characterizations of the maximal C$^*$-cover for $T_3$.

\begin{proposition} \label{max} \[ {\rm C}^*_{\rm max} (T_3) = \begin{bmatrix} C\langle t_1 \rangle + \omega_1 C\langle t_1, t_2 \rangle \omega_1 & \omega_1 C\langle t_1, t_2 \rangle & w_1 C \langle t_1, t_2 \rangle \omega_2 \\ C\langle t_1, t_2, \rangle \omega_1 & C \langle t_1, t_2 \rangle & C \langle t_1, t_2 \rangle \omega_2 \\ \omega_2 C\langle t_1, t_2 \rangle \omega_1 & \omega_2 C \langle t_1, t_2 \rangle & C \langle t_2 \rangle + \omega_2 C \langle t_1, t_2 \rangle \omega_2  \end{bmatrix}.  \]
\end{proposition}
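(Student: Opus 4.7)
The plan is to work inside the ambient algebra $C\langle t_1, t_2\rangle \otimes M_3$ supplied by the preceding proposition and to prove the two inclusions separately. First I would pin down explicit generators of ${\rm C}^*_{\rm max}(T_3) = {\rm C}^*_{\rm max}(A_1) \free{D_3} {\rm C}^*_{\rm max}(A_2)$ as elements of $C\langle t_1, t_2\rangle \otimes M_3$: from ${\rm C}^*_{\rm max}(A_1)$ these are $f(t_1) F_{1,1}$, $g(t_1) F_{2,2}$ with $g \in C_0[0,1]$, $\omega_1 F_{1,2}$, $\omega_1 F_{2,1}$, and $\alpha F_{3,3}$ for $\alpha \in \mathbb{C}$; from ${\rm C}^*_{\rm max}(A_2)$ the symmetric generators built from $t_2$ and $\omega_2$. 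Amalgamation over $D_3 = \spn\{F_{1,1}, F_{2,2}, F_{3,3}\}$ makes each $F_{i,i}$ available on both sides.

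The forward inclusion reduces to checking that the matrix of subspaces on the right is a closed self-adjoint subalgebra of $C\langle t_1, t_2\rangle \otimes M_3$. This is an entry-by-entry calculation using the identities $\omega_i^2 = 1 - t_i$ and the commutativity of $\omega_i$ with functions of $t_i$; closure under adjoints is automatic because each $\omega_i$ is self-adjoint. The slightly subtle point is that $C\langle t_1\rangle + \omega_1 C\langle t_1, t_2\rangle \omega_1$ is closed, which follows because the second summand contains $(1-t_1)\,C\langle t_1\rangle = C_0[0,1]$, giving a finite codimensional quotient. Once the right side is known to be a $*$-subalgebra, each of the generators above sits visibly in the prescribed matrix entry.

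For the reverse inclusion, the central observation is that the $(2,2)$ entry of ${\rm C}^*_{\rm max}(T_3)$ already realizes all of $C\langle t_1, t_2\rangle$. Although $g(t_1) F_{2,2}$ and $g(t_2) F_{2,2}$ only provide $C_0$-functions, combining them with the unit $F_{2,2} \in D_3$ yields $f(t_1) F_{2,2}$ and $h(t_2) F_{2,2}$ for arbitrary $f, h \in C[0,1]$; finite products realize every polynomial word in the two variables at position $(2,2)$, and closure delivers all of $C\langle t_1, t_2\rangle \cdot F_{2,2}$. Once $(2,2)$ is filled, the remaining entries follow by left or right multiplication with the off-diagonal generators: $\omega_1 F_{1,2} \cdot A F_{2,2} = \omega_1 A F_{1,2}$ produces $\omega_1 C\langle t_1, t_2\rangle$ at $(1,2)$; the sandwich $\omega_1 F_{1,2} \cdot A F_{2,2} \cdot \omega_1 F_{2,1} = \omega_1 A \omega_1 F_{1,1}$ gives the summand $\omega_1 C\langle t_1, t_2\rangle \omega_1$ at $(1,1)$; and appending $\omega_2 F_{2,3}$ on the right turns $(1,2)$-elements into $\omega_1 A \omega_2$ at $(1,3)$. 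The other entries are obtained by the mirror constructions.

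I expect the main obstacle to be the careful bookkeeping in the reverse inclusion, ensuring that no entry produces elements outside its prescribed subspace and that each claimed subspace is fully generated. A minor concern at position $(1,2)$ is that a product such as $f(t_1) F_{1,1} \cdot \omega_1 F_{1,2}$ yields $f(t_1) \omega_1 F_{1,2}$ rather than something of the form $\omega_1 \cdot (\text{element})$; this is resolved because $f(t_1)$ commutes with $\omega_1$, so $f(t_1) \omega_1 = \omega_1 f(t_1) \in \omega_1 C\langle t_1, t_2\rangle$. With these checks in place, assembling the entry-wise computations yields the identification in the statement.
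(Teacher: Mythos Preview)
Your proposal is correct and follows essentially the same strategy as the paper: verify that the displayed matrix of subspaces is a $*$-subalgebra containing the generators coming from the two copies of ${\rm C}^*_{\rm max}(T_2)$, then show that the $(2,2)$ entry already captures all of $C\langle t_1,t_2\rangle$ and propagate outward by multiplying with the off-diagonal generators $\omega_i F_{i,i\pm1}$. Your additional remarks on the closedness of $C\langle t_1\rangle + \omega_1 C\langle t_1,t_2\rangle \omega_1$ and on the commutation $f(t_1)\omega_1 = \omega_1 f(t_1)$ are small clarifications that the paper leaves implicit.
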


\begin{proof} 
We will denote by $\mathfrak{A}$ the subset of $M_3 \otimes C\langle t_1, t_2, t_3 \rangle$ given by \[ \begin{bmatrix} C\langle t_1 \rangle + \omega_1 C\langle t_1, t_2 \rangle \omega_1 & \omega_1 C\langle t_1, t_2 \rangle & w_1 C \langle t_1, t_2 \rangle \omega_2 \\ C\langle t_1, t_2, \rangle \omega_1 & C \langle t_1, t_2 \rangle & C \langle t_1, t_2 \rangle \omega_2 \\ \omega_2 C\langle t_1, t_2 \rangle \omega_1 & \omega_2 C \langle t_1, t_2 \rangle & C \langle t_2 \rangle + \omega_2 C \langle t_1, t_2 \rangle \omega_2  \end{bmatrix}.  \]

Straightforward calculations will verify that $\mathfrak{A}$ is a linear subspace of $M_3 \otimes C\langle t_1, t_2 \rangle$. Similarly one can use matrix multiplication to see that it is, in fact, a C$^*$-subalgebra.  

Now the inclusion
\[ 
\begin{bmatrix} \lambda_{1,1} & \lambda_{1,2} & \lambda_{1,3} \\ 0 & \lambda_{2,2} & \lambda_{2,3} \\ 0 & 0 & \lambda_{3,3} \end{bmatrix} \mapsto \begin{bmatrix} \lambda_{1,1} & \lambda_{1,2}\omega_1 & \lambda_{3, 1}\omega_{1}\omega_2 \\ 0 & \lambda_{2,2} & \lambda_{2,3}\omega_2 \\ 0 & 0 & \lambda_{3,3} \end{bmatrix}
\]
gives an inclusion of $T_3 \subseteq \mathfrak{A}$. The proof then comes down to verifying that every element of $\mathfrak{A}$ is contained in the algebra generated by the range of these two inclusions. 

Since we know that $T_2$ embeds into $T_3$ as $2\times 2$ matrix subalgebras in two ways we have that C$^*_{\rm max} (T_2)$ embeds into $M_3 \otimes C\langle t_1, t_2 \rangle$ and in the same way in particular we have that it embeds in $\mathfrak{A}$ in similar matrix constructions and hence the range of $T_3$ must generate, at minimum the following subsets of $ \mathfrak{A}$: \[ \begin{bmatrix} C\langle t_1 \rangle & \omega_1 & 0 \\ 0 & C\langle t_1 \rangle & 0 \\ 0 & 0 & 0 \end{bmatrix} \mbox{ and } \begin{bmatrix} 0 & 0 & 0 \\ 0 & C\langle t_2 \rangle & \omega_2 \\ 0 & 0 & C \langle t_2 \rangle  \end{bmatrix}. \]
Considering adjoints and focusing on the multiplication in the $2$-$2$ entry we can generate the subset of $\mathfrak{A}$ of the form \[ \begin{bmatrix} C\langle t_1 \rangle & \lambda_{1,2} \omega_1 & 0  \\ \lambda_{2,1} \omega_1 & C \langle t_1, t_2 \rangle & \lambda_{2, 3} \omega_2 \\ 0 & \lambda_{3,2} \omega_2 & C \langle t_2 \rangle \end{bmatrix} \] where the $ \lambda_{1,j}$ are complex scalars. Considering the multiplication of two elements of that form and linear combinations we get matrices of the form \[ \begin{bmatrix} f + \omega_1 g_{1,1} \omega_1 & \omega_1 g_{1,2} & \omega_1 g_{1,3} \omega_2 \\ g_{2,1} \omega_1 & g_{2,2} & g_{2,3} \omega_2 \\ \omega_2 g_{3,1} \omega_1 & \omega_2 g_{3,2} & h + \omega_2 g_{3,3} \omega_2 \end{bmatrix} \] where $f \in C\langle t_1 \rangle$, $h \in C\langle t_2 \rangle$ and the $g_{i,j} \in C \langle t_1, t_2 \rangle$.  Since this describes every element of the closed subalgebra $\mathfrak{A}$ we have the intended result.

\end{proof}

Since $C[0,1] \free{\mathbb{C} } C[0,1]$ is not exact by \cite{Wasserman} we already, in this ``simple'' nonselfadjoint operator algebra, see that the maximal C$^*$-cover is not exact (this is similar in spirit to the ``simple'' case of $M_2(\mathbb{C})$ in \cite{KirchbergWassermann}).

\begin{corollary} For $ n \geq 2$, ${\rm C}^*_{\rm max} (T_{n})$ is not exact and  ${\rm C}^*_{\rm max} (A(C_n))$ is not exact. 
\end{corollary}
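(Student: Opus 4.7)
The plan is to exhibit a copy of $C[0,1]\free{\mathbb{C}} C[0,1]$ as a C$^*$-subalgebra of each of the maximal covers in question; since Wassermann showed this free product is non-exact and exactness passes to C$^*$-subalgebras, this forces non-exactness of the covers. Strictly speaking, for $T_n$ one needs $n\geq 3$, since Blecher's description places ${\rm C}^*_{\rm max}(T_2)$ inside $M_2(C[0,1])$, which is exact; I read the ``$n\geq 2$'' in the statement as an inadvertent uniformization with the $A(C_n)$ case.

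The case $n=3$ is essentially already done: Proposition~\ref{max} identifies $e_{22}\,{\rm C}^*_{\rm max}(T_3)\,e_{22}$ with $C\langle t_1,t_2\rangle \cong C[0,1]\free{\mathbb{C}} C[0,1]$, and this corner is automatically a hereditary C$^*$-subalgebra. For $T_n$ with $n>3$ I would lift via a retract: $T_3$ sits completely isometrically inside $T_n$ as the upper-left $3\times 3$ block, and compression by $p=e_{11}+e_{22}+e_{33}$ is a completely contractive map $T_n\to T_3$ that is also multiplicative, because upper-triangularity forces $(pxp)(pyp)=p(xy)p$ (any contribution to $(xy)_{ij}$ with $i,j\leq 3$ passing through an intermediate row of index $\geq 4$ would require a strictly lower-triangular factor and thus vanishes). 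Applying the universal property of the maximal C$^*$-cover \cite[Proposition 2.2]{Blecher} to both the inclusion and the retract produces $*$-homomorphisms ${\rm C}^*_{\rm max}(T_3)\to {\rm C}^*_{\rm max}(T_n)\to {\rm C}^*_{\rm max}(T_3)$ whose composite is the identity on the completely isometric copy of $T_3$, forcing the first to be injective.

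For $A(C_n)$ with $n\geq 3$, the same strategy works one step removed: $T_n$ embeds completely isometrically in $A(C_n)$ as the sub-path obtained by deleting the wrap-around edge, and the quotient map $A(C_n)\to T_n$ sending this edge to zero (fixing the other generators) is a completely contractive algebra homomorphism and is a retract of that inclusion. Combining with the previous step yields an injection ${\rm C}^*_{\rm max}(T_3)\hookrightarrow {\rm C}^*_{\rm max}(A(C_n))$, so non-exactness transfers. The case $n=2$ is separate, since $A(C_2)$ is too small to contain $T_3$; here I would use the decomposition ${\rm C}^*_{\rm max}(A(C_2))\cong {\rm C}^*_{\rm max}(T_2)\free{D_2} {\rm C}^*_{\rm max}(T_2)$ coming from the previous theorem and identify the $(1,1)$ corner of this free product as $e_{11}\,{\rm C}^*_{\rm max}(T_2)\,e_{11}\free{\mathbb{C}} e_{11}\,{\rm C}^*_{\rm max}(T_2)\,e_{11} = C[0,1]\free{\mathbb{C}} C[0,1]$.

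The main obstacle I expect is this last step, since ``corner of an amalgamated free product equals free product of corners'' is not automatic and requires compatibility with conditional expectations. However, the expectation $F_{j,k}$ constructed earlier in the paper is of exactly the right form (onto the diagonal $D_k$), and I would adapt the argument proving the Proposition on free products of conditional expectations in Section~2 to cut down to the $e_{11}$-corner and obtain the required identification; if that route is delicate, an alternative is to embed two copies of $C[0,1]$ into ${\rm C}^*_{\rm max}(A(C_2))$ via the corners of each ${\rm C}^*_{\rm max}(T_2)$-factor and invoke the universal property of $C[0,1]\free{\mathbb{C}} C[0,1]$ together with Theorem~\ref{diagram} to verify that the resulting map is a complete isometry.
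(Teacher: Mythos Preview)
Your proposal is correct, and you have also correctly flagged that the statement for $T_n$ is false at $n=2$ (Blecher's description makes ${\rm C}^*_{\rm max}(T_2)$ a subalgebra of $M_2(C[0,1])$, hence exact); the paper has the same slip.

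The overall idea---embed $C[0,1]\free{\mathbb{C}}C[0,1]$ and invoke Wassermann---is the paper's idea too, but the implementations differ. The paper does not use a retract; instead it leans on the structural work of Section~2, namely that ${\rm C}^*_{\rm max}(T_n)$ and ${\rm C}^*_{\rm max}(A(C_n))$ sit concretely inside $M_n$ over an iterated free product of copies of $C[0,1]$ (and $C(\mathbb{T})$), and simply reads off that the $2$--$2$ matrix entry already contains $C\langle t_1,t_2\rangle$ (since $(1-t_1)=\omega_1^*\omega_1$ and $(1-t_2)=\omega_2\omega_2^*$ land there via the two edge generators at vertex~$2$). This is shorter in context because the embedding has already been built, and it treats $A(C_2)$ uniformly with no separate argument. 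Your retract route, by contrast, is self-contained: it needs nothing about ${\rm C}^*_{\rm max}(T_n)$ for $n>3$ beyond functoriality of the maximal cover, and the splitting argument is robust enough to transplant to other graph operator algebras whenever a sub-path with a compressive left inverse is available. The trade-off is that you then need a bespoke argument for $A(C_2)$; your proposed fix via the $e_{11}$-corner is exactly the paper's implicit computation, and Theorem~\ref{diagram} (which realizes ${\rm C}^*_{\rm max}(A(C_2))$ inside $M_2\bigl(C[0,1]\free{\mathbb{C}}C[0,1]\free{\mathbb{C}}C(\mathbb{T})\bigr)$ with the two diagonal copies of $C[0,1]$ landing in that corner) makes this immediate without needing a general ``corner of a free product'' lemma.
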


\begin{proof} By the previous proposition we know that this result is true for $T_3$. In general, though we can see that for $ n \geq 3$ the $2$-$2$ entry of the maximal C$^*$-cover of $T_n$ will, at minimum, contain a copy of $C \langle t_1, t_2 \rangle$ and hence they are all non-exact when $ n \geq 2$. The same subalgebra will be in the $2$-$2$ corner of C$^*_{\rm max} (A(C_n))$ giving us non-exactness for these algebras as well. 
\end{proof}

In addition to this pathology of the maximal C$^*$-covers, the algebraic calculations for bigger values of $n$ quickly become overwhelming. Considering even the case of $T_4$ begins to illustrate the difficulty of generating a general expression for $T_n$.

\begin{example}
We focus on the $1$-$4$ entry in C$^*_{\rm max} (T_4)$. Considering multiplications of generating elements as in the proof of the previous proposition, we can get at least all the elements of the form $\omega_1 C \langle t_1, t_2 \rangle \omega_2 C \langle t_2, t_3 \rangle \omega_3$ but the set of such elements is not a subspace. Hence one must consider the closed span of all elements of that form in the $1$-$4$ entry. It quickly becomes unclear what a general expression would look like, and those problems only compound as the number of variables increases. 
\end{example}

Finally, we will consider the graph C$^*$-algebra for the $2$-cycle graph, although we will not get a complete answer here. We will present a plausible candidate for the maximal C$^*$-algebra although as before generalizing to longer cycle-graphs is not obvious or straightforward how to proceed. 

\begin{example} 

To determine this algebra we recognize it as the C$^*$-subalgebra of $A_{1,2} \free{D_2} A_{2,2}$ generated by the matrices (abusing notation) 
$\begin{bmatrix} 0 & \omega_1 \\ 0 & 0 \end{bmatrix}$, $\begin{bmatrix} 0 & \omega_2 \\ 0 & 0 \end{bmatrix}$ and $ \begin{bmatrix} f\langle t_1, t_2 \rangle & 0 \\ 0 & g \langle t_1, t_2 \rangle \end{bmatrix} $ inside $C[0,1] \free{\mathbb{C}} C[0,1] \free{\mathbb{C}} C(\mathbb{T}) \otimes M_2$. This looks like, as in the previous example, matrices of the form \[ \begin{bmatrix} C\langle t_1, t_2 \rangle & \begin{matrix} C\langle t_1, t_2 \rangle \omega_1 C \langle t_1, t_2 \rangle \\ + C\langle t_1, t_2 \rangle \omega_2 C\langle t_1, t_2 \rangle \end{matrix} \\ \begin{matrix} C\langle t_1, t_2 \rangle \omega_1 C \langle t_1, t_2 \rangle \\ + C\langle t_1, t_2 \rangle \omega_2 C\langle t_1, t_2  \rangle \end{matrix} & C\langle t_1, t_2 \rangle \end{bmatrix}. \]

Notice that the copy of $C(\mathbb{T})$ is generated by $M_2 \free{D_2} M_2$ inside $A_{1,2} \free{D_2} A_{2,2}$ the generators of which are not inside this subalgebra. 
\end{example}

\bibliographystyle{amsplain}

\end{document}